\documentclass[a4paper,12pt]{article}
\usepackage{amsmath,amsfonts,amssymb,amsthm}
\usepackage{graphics}
\usepackage{color}
\usepackage{xcolor}
\usepackage{epsfig}
\setlength{\parindent}{0cm}

\newcommand{\ignore}[1]{}
\newcommand{\av}{-\hspace{-2.4ex}\int}
\newcommand{\stackit}[2]{\genfrac{}{}{0pt}{1}{#1}{#2}}

\newtheorem{proposition}{Proposition}
\newtheorem{theorem}{Theorem}

\newtheorem{lemma}{Lemma}

\def\Xint#1{\mathchoice
{\XXint\displaystyle\textstyle{#1}}%
{\XXint\textstyle\scriptstyle{#1}}%
{\XXint\scriptstyle\scriptscriptstyle{#1}}%
{\XXint\scriptscriptstyle\scriptscriptstyle{#1}}%
\!\int}
\def\XXint#1#2#3{{\setbox0=\hbox{$#1{#2#3}{\int}$ }
\vcenter{\hbox{$#2#3$ }}\kern-.6\wd0}}

\def\dashint{\Xint-}

\newcommand{\supp}{\operatorname{supp}}

\begin{document}

\author{Julian Fischer and Felix Otto}
\title{Sublinear growth of the corrector in stochastic homogenization: Optimal stochastic estimates for slowly decaying correlations}
\maketitle

\begin{abstract}
We establish sublinear growth of correctors in the context of stochastic homogenization of linear elliptic PDEs. In case of weak decorrelation and ``essentially Gaussian'' coefficient fields, we obtain optimal (stretched exponential) stochastic moments for the minimal radius above which the corrector is sublinear. Our estimates also capture the quantitative sublinearity of the corrector (caused by the quantitative decorrelation on larger scales) correctly. The result is based on estimates on the Malliavin derivative for certain functionals which are basically averages of the gradient of the corrector, on concentration of measure, and on a mean value property for $a$-harmonic functions.
\end{abstract}

\section{Introduction}

In the present work, we are concerned with the stochastic homogenization of linear elliptic equations of the form
\begin{align}
\label{BasicProblem}
-\nabla \cdot a\nabla u = f.
\end{align}
In stochastic homogenization of elliptic PDEs, $a$ is typically a uniformly elliptic and bounded coefficient field, chosen at random according to some stationary and ergodic ensemble $\langle\cdot\rangle$. On large scales (and for slowly varying $f$), one may then approximate the solution $u$ to the equation \eqref{BasicProblem} by the solution $u_{hom}$ to the so-called \emph{effective equation}
\begin{align}
\label{EffectiveEquation}
-\nabla \cdot a_{hom}\nabla u_{hom} = f,
\end{align}
which is a constant-coefficient equation with the so-called effective coefficient $a_{hom}$. Mathematically, this homogenization effect is encoded in growth properties of the \emph{corrector} (cf.\ below for a definition of the corrector).

\medskip

The goal of the present paper is to provide a fairly simple
proof of {\it quantified} sublinear growth of the corrector under
very mild assumptions on the decorrelation of the coefficient field $a$ under the ensemble $\langle \cdot \rangle$.
We do this in the context of coefficient fields that are essentially Gaussian.
More precisely, we consider coefficient fields $a$ which are obtained from a Gaussian random field by pointwise application of some (nonlinear) mapping, the role of the nonlinear map being basically to enforce uniform ellipticity and boundedness of our coefficient field.

\medskip

The motivation for this result is the following:
Gloria, Neukamm, and the second author \cite{GloriaNeukammOtto} have shown that \emph{qualitatively} sublinear growth of the (extended) corrector $(\phi,\sigma)$ (cf.\ below for a definition) entails a large-scale intrinsic $C^{1,\alpha}$ regularity theory for $a$-harmonic functions.
In a subsequent work \cite{FischerOtto}, the two authors of the present paper have shown that \emph{slightly quantified} sublinear growth of the corrector even leads to a large-scale intrinsic $C^{k,\alpha}$ regularity theory for any $k\in \mathbb{N}$. Therefore, the results of the present work show that even in case of ensembles with very mild decorrelation, for almost every realization of the coefficient field, $a$-harmonic functions have arbitrary intrinsic smoothness properties on large scales. Furthermore, our results enable us to estimate the scale above which this happens -- a random quantity -- in a stochastically optimal way.
Indeed, the motivation for the present work was to establish such a (necessarily intrinsic) higher order regularity theory under the weakest possible assumptions on the decay of correlations.

\medskip

By an {\it intrinsic} regularity theory we mean that the regularity is measured
in terms of objects intrinsic to the Riemannian geometry defined by the coefficient
field $a$, like the dimension of the space of $a$-harmonic functions of a certain algebraic
growth rate, or like estimates on the H\"older modulus of the derivative of $a$-harmonic functions as
measured in terms of their distance to $a$-linear functions.
An {\it extrinsic} large-scale regularity theory for $a$-harmonic functions in case of
random coefficients was initiated on the level of a $C^{0,\alpha}$ 
in \cite{BenjaminiCopinKozmaYadin,MarahrensOtto} and pushed to $C^{1,0}$ in \cite{ArmstrongSmart},
which significantly extended qualitative arguments from the periodic case \cite{AvellanedaLin} to
quantitative arguments in the random case.
However, an extrinsic regularity theory is limited to $C^{1,0}$, as can be seen considering the harmonic coordinates: Taking higher order polynomials into account does not increase the local approximation order.

\medskip

After this motivation, we now return to the discussion of the history on bounds
on the corrector, as they depend on assumptions on the stationary ensemble of coefficient fields.
Almost-sure sublinearity (always meant in a spatially averaged sense) of the corrector $\phi$ 
under the mere assumption of ergodicity
was a key ingredient in the original work on stochastic homogenization by Kozlov \cite{Kozlov} and
by Papanicolaou \& Varadhan \cite{PapanicolaouVaradhan}. 
Almost-sure sublinearity of the {\it extended} corrector $(\phi,\sigma)$, as is needed
for the large-scale intrinsic $C^{1,\alpha}$-regularity theory, was established
in \cite{GloriaNeukammOtto} under mere ergodicity. 

\medskip

Yurinskii \cite{Yurinskii} was the first to quantify sublinear growth under
general mixing conditions, however only capturing suboptimal rates even in case of finite range of dependence. 
Very recently, a much improved quantification of sublinear growth of $\phi$ under finite
range assumptions was put forward by Armstrong, Kuusi, and Mourrat \cite{ArmstrongKuusiMourrat},
relying on a variational approach to quantitative stochastic homogenization introduced
by Armstrong and Smart \cite{ArmstrongSmart}, an approach which presumably can be extended to the
case of non-symmetric coefficients and more general mixing conditions following
\cite{ArmstrongMourrat}. However, while this approach gives the optimal, i.\ e.\ Gaussian,
stochastic integrability, it presently fails to give the optimal growth rates.

\medskip

Optimal growth rates have been obtained under a quantification of ergodicity different from finite
range or mixing conditions, namely under Spectral Gap assumptions on the ensemble.
This functional analytic tool from statistical mechanics was introduced into the field
of stochastic homogenization in an unpublished paper by Naddaf and Spencer \cite{NaddafSpencer},
and further leveraged by Conlon et.\ al.\ \cite{ConlonNaddaf,ConlonSpencer},
yielding optimal rates for some errors in stochastic homogenization in case of a small
ellipticity contrast. The work of Gloria, Neukamm and the second author extended these results
to the present case of arbitrary ellipticity contrast \cite{GloriaNeukammOttoInventiones,GloriaOtto1,GloriaOtto2}, in particular yielding
at most logarithmic growth of the corrector (and its stationarity in $d>2$).
Loosely speaking, the assumption of a Spectral Gap Inequality amounts to correlations
with integrable tails; in the above-mentioned works it has been used for discrete media (i.\ e.\ random
conductance models), but has subsequently been extended to the continuum case
\cite{GloriaOttoCorrectorEquation,GloriaOttoPeriodicApproximation}.

\medskip

A strengthening of the Spectral Gap Inequality is given by the Logarithmic Sobolev
Inequality (LSI); it is a slight strengthening in terms of the assumption (still essentially encoding
integrable tails of the correlations), but a substantial improvement in its effect,
since it implies Gaussian concentration of measure for Lipschitz random variables. 
The assumption of LSI and implicitly concentration of measure, which will be 
explicitly used in this work, has been
introduced into stochastic homogenization by Marahrens and the second author \cite{MarahrensOtto}.
In \cite{GloriaNeukammOtto}, it has been shown that the concept of LSI can be adapted to also capture
ensembles with slowly decaying correlations, i.\ e.\ thick non-integrable tails,
by adapting the norm of the vertical or Malliavin derivative to the correlation structure.
As a result, the stochastic integrability of the optimal rates could be improved from
algebraic to (stretched) exponential, but missing the expected Gaussian integrability.

\medskip

The main merit of the present contribution w.\ r.\ t.\ to \cite{GloriaNeukammOtto} is twofold: First, our approach directly provides optimal quantitative sublinearity of the corrector $(\phi,\sigma)$ on all scales above a random minimal radius $r_\ast$, i.e.\ in contrast to the estimates of \cite{GloriaNeukammOtto} our estimates capture the decorrelation on scales larger than $r_\ast$ in a single argument. Note that our definition of $r_\ast$ differs from the one in \cite{GloriaNeukammOtto}. Second, in case of weak decorrelation, our simpler arguments are nevertheless sufficient to establish optimal stochastic moments for the minimal radius $r_\ast$ above which the corrector $(\phi,\sigma)$ displays the quantified sublinear growth.

\medskip

In the present work, we consider the following type of ensembles on $\lambda$-uniformly elliptic tensor fields 
$a=a(x)$ on $\mathbb{R}^d$: 
Let $\tilde a=\tilde a(x)$ be a tensor-valued Gaussian random field on $\mathbb{R}^d$
that is centered (i.\ e.\ of vanishing expectation) and stationary (i.\ e.\ invariant under translation)
and thus characterized by the covariance $\langle \tilde a(x)\otimes \tilde a(0)\rangle$.
Our only additional assumption on $\tilde a$ is that there exists an exponent $\beta\in (0,d)$ such that
\begin{equation}\label{o1}
\big|\langle\tilde a(x)\otimes \tilde a(0)\rangle\big|\le |x|^{-\beta}\quad\mbox{for all}\;x\in\mathbb{R}^d.
\end{equation}
In this work, we are concerned with the case of weak decay of correlation in the sense of $\beta\ll 1$.
Let $\Phi$ be a 1-Lipschitz map from the space of tensors into the space of $\lambda$-elliptic symmetric tensors.
Then our ensemble is the distribution of $a$ where $a$ is given by $a(x):=\Phi(\tilde a(x))$.
Note that the normalization in the constant in (\ref{o1}) and in the Lipschitz constant is not essential,
since it can be achieved by a rescaling of $x$ and the amplitude of $\tilde a$.

\medskip

Concerning the mathematical tools of our approach, several ideas are inspired by the work \cite{GloriaNeukammOtto}. In particular, a key component of our approach are sensitivity estimates (Malliavin derivative bounds) for certain integral functionals, which basically average the gradient $\nabla (\phi,\sigma)$ over an appropriate cube. Furthermore, we rely on a mean-value property for $a$-harmonic functions, which has been derived in \cite{GloriaNeukammOtto} under appropriate smallness assumptions on the corrector. In our present contribution, we however pursue a conceptually simpler route to estimate the Malliavin derivative: The sensitivity estimate is performed through appropriate $L^q$-norm bounds and Meyer's estimate, rather than a more involved $\ell^2-L^1$-norm bound like in \cite{GloriaNeukammOtto}.

\medskip

Before stating our main results, let us recall the concept of correctors in homogenization and introduce some notation.
The basic idea underlying the concept of correctors in homogenization is the observation that the oscillations in the gradient $\nabla u_{hom}$ of solutions to the homogenized (constant-coefficient) problem \eqref{EffectiveEquation} occur on a much larger scale than the oscillations in the gradient $\nabla u$ of solutions to the original problem \eqref{BasicProblem}. Thus, it is important to understand how to add oscillations to an affine map (an affine map being always $a_{hom}$-harmonic) to obtain an $a$-harmonic map.
In the context of stochastic homogenization, one is therefore interested in constructing random scalar fields $\phi_i=\phi_i(a,x)$ subject to the equation
\begin{equation}\label{f4}
-\nabla\cdot a(e_i+\nabla\phi_i)=0
\end{equation}
which almost surely display sublinear growth in $x$: The $\phi_i$ then facilitate the transition from the $a_{hom}$-harmonic (Euclidean) coordinates $x\mapsto x_i$ to the ``$a$-harmonic coordinates'' $x\mapsto x_i+\phi_i(x)$. Since any affine map may be represented in the form $b+\sum_i \xi_i x_i$ for $b,\xi_i\in \mathbb{R}$, the $\phi_i$ also facilitate the construction of associated $a$-harmonic ``corrected affine maps'' $b+\sum_i \xi_i(x_i+\phi_i)$.

\medskip

With the help of the corrector, one may characterize the effective coefficient $a_{hom}$: In our setting of stochastic homogenization, the effective coefficient is given by the formula
\begin{align}
\label{EffectiveCoefficient}
a_{hom} e_i = \big\langle a(e_i+\nabla \phi_i) \big\rangle,
\end{align}
where $\langle \cdot \rangle$ refers to the expectation with respect to our ensemble (i.e.\ probability measure).

\medskip

In the language of a conducting medium with conductivity tensor $a$ -- note that in this picture, one has $f\equiv 0$ in \eqref{BasicProblem} -- , the quantity $E_i:=e_i+\nabla\phi_i$ corresponds to the (curl-free) ``microscopic'' electric field associated with a ``macroscopic'' electric field $e_i$ (and, therefore, $\phi_i$ corresponds to the ``microscopic'' correction to the ``macroscopic'' electric potential $x_i$). The corresponding (divergence-free) ``microscopic'' current density is given by
\begin{align}\label{DefinitionCurrent}
q_i :=a(e_i+\nabla\phi_i),
\end{align}
while the ``macroscopic'' current density associated with the ``macroscopic'' electric field $e_i$ is given by the ``average'' of this quantity, i.e.\ by the expression (\ref{EffectiveCoefficient}).

\medskip

In periodic homogenization of linear elliptic PDEs, it turns out to be convenient to introduce a dual quantity to the corrector $\phi_i$ (cf.\ e.g.\ \cite[p.27]{JikovKozlovOleinik}): One constructs a tensor field $\sigma_{ijk}$, skew-symmetric in the last two indices, which is a potential for the flux correction $q_i-a_{hom}e_i$ in the sense
\begin{equation}\label{f1}
\nabla\cdot\sigma_{i}=a(e_i+\nabla\phi_i)-a_{hom}e_i,
\end{equation}
where we have set $(\nabla\cdot\sigma_{i})_j:=\sum_{k=1}^d\partial_k\sigma_{ijk}$.
With the help of this ``extended corrector'' $(\phi, \sigma)$, it is possible to give a bound on the homogenization error (in terms of appropriate norms of $\phi$ and $\sigma$).

\medskip

One of the main merits of \cite{GloriaNeukammOtto} is the discovery of the usefulness of this extended corrector $(\phi,\sigma)$ in the context of stochastic homogenization. For stationary and ergodic ensembles
$\langle\cdot\rangle$ of $\lambda$-uniformly elliptic and symmetric coefficient fields $a=a(x)$ on $\mathbb{R}^d$, in \cite{GloriaNeukammOtto} correctors $\phi_i$ and $\sigma_{ijk}$ such that
\begin{equation}\label{f2}
\nabla\phi_i,\nabla\sigma_{ijk}\;
\begin{array}{l}
\mbox{are stationary,}\\ 
\mbox{of bounded second moment,}\\
\mbox{and of vanishing expectation,}
\end{array}
\end{equation}
have been constructed. As a consequence of this and of ergodicity, the $\phi_i$ and $\sigma_{ijk}$ almost surely display sublinear growth.
Note that in case of $\sigma_i$, the choice of the appropriate gauge is important for the property (\ref{f2}) and for our work, as the equation \eqref{f1} determines $\sigma_i$ (which by its skew-symmetry and its behavior under change of coordinates may be identified with a $d-1$-form) only up to the exterior derivative of a $d-2$-form. In fact, the choice of the gauge in \cite{GloriaNeukammOtto} is such that
\begin{equation}\label{f5}
-\triangle\sigma_{ijk}=\partial_jq_{ik}-\partial_kq_{ij},
\end{equation}
which in view of \eqref{f4} and \eqref{DefinitionCurrent} is clearly compatible with (\ref{f1}).


\medskip

\medskip

{\bf Notation.} 
To quantify the ellipticity and boundedness of our coefficient fields, throughout the paper we shall work with the assumptions
\begin{align}
av\cdot v&\geq \lambda |v|^2\quad\text{for all }v\in \mathbb{R}^d,
\label{aEllipticity}
\\
|av|&\leq |v|\quad\text{for all }v\in \mathbb{R}^d,
\label{aBoundedness}
\end{align}
where $\lambda\in (0,1)$. Note that in view of rescaling, the upper bound \eqref{aBoundedness} on $a$ does not induce a loss of generality of our results.

For our convenience, throughout the paper we shall assume that our coefficient field $a$ is symmetric. The arguments however easily carry over to the case of non-symmetric coefficient fields by simultaneously considering the correctors for the dual equation (i.e. the PDE with coefficient field $a^\ast$, $a^\ast$ denoting the transpose of $a$).

The expression $s\lesssim t$ is an abbreviation for $s\le C t$ with $C$ a generic constant only depending on the dimension $d$, the exponent $\beta>0$, and the ellipticity ratio $\lambda>0$.

The expression $s\ll t$ stands for $s\le \frac{1}{C} t$ with $C$ a generic sufficiently large constant only depending on the dimension $d$, the exponent $\beta>0$, and the ellipticity ratio $\lambda>0$.

By $I(E)$ we denote the characteristic function of an event $E$.

The notation $\dashint_A f$ refers to the average integral over the set $A$, i.e. we have $\dashint_A f \,dx=\int_A f \,dx/\int_A 1 \,dx$.

In the sequel, $(\phi,\sigma)$ stands for any component $\phi_i,\sigma_{ijk}$ for $i,j,k=1,\cdots,d$.

\section{Main Results and Structure of Proof}

Let us now state our main theorem.
To quantify the sublinear growth of the extended corrector $(\phi,\sigma)$, we first quantify the decay of spatial averages of $\nabla(\phi,\sigma)$ over larger scales.
In view of the decorrelation assumption (\ref{o1}) for our ensemble of coefficient fields, we expect that, up to logarithms, it is the exponent $\frac{\beta}{2}$ that governs the decay of averages of $\nabla (\phi,\sigma)$ and the improvement over linear growth for $(\phi,\sigma)$. Indeed, this exponent is reflected in the theorem.

\begin{theorem}\label{main}
Let $\tilde a=\tilde a(x)$ be a tensor-valued Gaussian random field on
$\mathbb{R}^d$ that is centered (i.\ e.\ of vanishing expectation)
and stationary (i.\ e.\ invariant under translation); assume that the covariance of $\tilde a$ satisfies the estimate
\begin{equation}\label{o1_b}
\big|\langle\tilde a(x)\otimes \tilde a(0)\rangle\big|\le |x|^{-\beta}\quad\mbox{for all}\;x\in\mathbb{R}^d
\end{equation}
for some $\beta\in (0,d)$. Let $\Phi:\mathbb{R}^{d\times d}\rightarrow \mathbb{R}^{d\times d}$ be a Lipschitz map with Lipschitz constant $\leq 1$; suppose that $\Phi$ takes values in the set of symmetric matrices subject to the ellipticity and boundedness assumptions \eqref{aEllipticity}, \eqref{aBoundedness}. Define the ensemble $\langle\cdot\rangle$ as the probability distribution of $a$, where $a$ is the image of $\tilde a$ under pointwise application of the map $\Phi$, i.e.\ $a(x):=\Phi(\tilde a(x))$.

\medskip

Assume in addition on the ensemble $\langle\cdot\rangle$ that $\beta$ in (\ref{o1_b}) is sufficiently 
small in the sense of
\begin{align}
\beta\le\frac{1}{C},
\label{AssumptionBetaSmallness}
\end{align}
where $C$ denotes a generic constant only depending on $d$ and $\lambda$.
\begin{itemize}
\item[i)] Consider a linear functional $F=Fh$ on vector fields $h=h(x)$ satisfying the boundedness property
\begin{equation}\label{o73}
|Fh|\le\big(\av_{|x|\le r}|h|^\frac{2d}{d+\beta}dx\big)^\frac{d+\beta}{2d}
\end{equation}
for some radius $r>0$. Then the random variable
$F\nabla(\phi,\sigma)$ satisfies uniform Gaussian bounds in the sense of
\begin{equation}\label{o72}
\langle I(|F\nabla(\phi,\sigma)|\ge M)\rangle\le C\exp(-\frac{1}{C}r^\beta M^2)\quad\mbox{for all}\;M\le 1.
\end{equation}
\item[ii)] There exists a (random) radius $r_*$ for which the ``iterated logarithmic'' bound
\begin{equation}\label{o62}
\frac{1}{r^2}\av_{|x|\le r}|(\phi,\sigma)-\av_{|x|\le r}(\phi,\sigma)|^2dx\le(\frac{r_*}{r})^\beta
\log(e+\log(\frac{r}{r_*}))\quad\mbox{for}\;r\ge r_*
\end{equation}
holds and which satisfies the stretched exponential bound
\begin{equation}\label{o17}
\langle\exp(\frac{1}{C}r_*^\beta)\rangle\le C.
\end{equation}
\end{itemize}
\end{theorem}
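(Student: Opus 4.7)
\emph{Overall strategy.} The proof rests on Gaussian concentration of measure: any random variable $X=X(\tilde a)$ built from the Gaussian field $\tilde a$ is sub-Gaussian with variance proxy given by the squared norm of its Malliavin derivative in the reproducing kernel Hilbert space attached to the covariance of $\tilde a$. The covariance bound (\ref{o1_b}) identifies this Hilbert norm, via a Hardy--Littlewood--Sobolev pairing, with the $L^{2d/(d+\beta)}$ norm of the ``pointwise'' Malliavin derivative $y\mapsto \partial X/\partial \tilde a(y)$ (up to a $\beta$-dependent constant). The heart of the argument is therefore an $L^{2d/(d+\beta)}$ sensitivity estimate for $F\nabla(\phi,\sigma)$.

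\emph{Part (i).} I would treat $X=F\nabla\phi_i$ and $X=F\nabla\sigma_{ijk}$ separately. Linearizing the corrector equation (\ref{f4}) along an infinitesimal perturbation $\delta\tilde a$ yields $-\nabla\cdot a\nabla\delta\phi_i=\nabla\cdot\bigl(\delta a\,(e_i+\nabla\phi_i)\bigr)$ with $\delta a=\Phi'(\tilde a)\delta\tilde a$ and $|\delta a|\le|\delta\tilde a|$. Testing against the weak solution $v$ of the adjoint equation determined by the functional $F$ produces the representation
\[
\frac{\partial(F\nabla\phi_i)}{\partial\tilde a(y)}=-\Phi'(\tilde a(y))\,(e_i+\nabla\phi_i(y))\otimes\nabla v(y).
\]
Meyer's $L^q$ estimate applied to $\nabla v$ (with some $q>2$), combined with the boundedness property (\ref{o73}) of $F$ and with $L^p$ moment bounds on $e_i+\nabla\phi_i$, then produces the desired $L^{2d/(d+\beta)}$ control on the Malliavin derivative with constant $\lesssim r^{-\beta/2}$. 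The case of $F\nabla\sigma$ is analogous once one uses the Laplace representation (\ref{f5}) to express $\nabla\sigma_{ijk}$ in terms of $q_i=a(e_i+\nabla\phi_i)$. Feeding this into Gaussian concentration yields (\ref{o72}); the range $M\le 1$ matches the range on which the exponents from Meyer remain available.

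\emph{Part (ii).} Choose a dyadic family $r_n=2^n$. On each scale, apply Part (i) to a finite family of functionals $F_{n,k}$ that together control spatial averages of $\nabla(\phi,\sigma)$ against a basis of low-degree polynomial weights on $|x|\le r_n$ (these being the modes needed to improve the Poincar\'e-type bound (\ref{o62})). Define the random dyadic radius $r_*$ as the smallest $r$ such that for every $r_n\ge r$ and every $k$ one has $|F_{n,k}\nabla(\phi,\sigma)|\le C r_n^{-\beta/2}\sqrt{\log(e+\log(r_n/r))}$. A union bound fed by (\ref{o72}), in which the $\log\log$ factor is precisely what makes the sum over dyadic scales (and over the finitely many $k$) summable, gives $\langle I(r_*\ge R)\rangle\lesssim \exp(-R^\beta/C)$, which is (\ref{o17}). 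Finally, to pass from this control on averages of $\nabla(\phi,\sigma)$ to the Campanato-type bound (\ref{o62}), I would invoke the mean value property for $a$-harmonic functions from \cite{GloriaNeukammOtto} together with a telescoping decomposition of $(\phi,\sigma)-\dashint_{|x|\le r}(\phi,\sigma)$ into contributions from the dyadic scales between $r_*$ and $r$, each controlled by the bound obtained above.

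\emph{Main obstacle.} The delicate step is the sensitivity estimate in Part (i): one simultaneously needs $\nabla v\in L^q$ and $\nabla\phi_i\in L^p$ with H\"older exponents pairing precisely to $2d/(d+\beta)$, and with both $p$ and $q$ remaining inside the (finite) Meyer window around $2$. This is what forces the smallness assumption (\ref{AssumptionBetaSmallness}): only for $\beta\ll 1$ is $2d/(d+\beta)$ close enough to $2$ that the integrabilities produced by Meyer on both the primal and dual equations cover the gap. A secondary subtlety is to produce the $L^p$ bounds on $\nabla\phi_i$ without circularity, which one handles by a bootstrap using deterministic energy estimates on finite domains together with a priori stationary moment control.
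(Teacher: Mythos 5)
Your overall architecture (Malliavin derivative of $F\nabla(\phi,\sigma)$, Hardy--Littlewood--Sobolev pairing with the covariance, Meyer's estimate, Gaussian concentration, then a union bound over dyadic scales with the $\log\log$ factor providing summability) is the right one and matches the paper. But there is a genuine gap at the step you dismiss as a ``secondary subtlety'': the sensitivity estimate is \emph{conditional}. To run H\"older between $\nabla v\in L^p$ and $e_i+\nabla\phi_i$ on dyadic annuli one needs the \emph{quenched, realization-wise} bound $\int_{|x|\le\rho}|\nabla\phi_i+e_i|^2dx\lesssim\rho^d$ for all $\rho\ge r$, which in the paper comes from the mean value property for $a$-harmonic functions applied to $x_i+\phi_i$ — and that mean value property holds only on the event that the corrector is suitably small on all scales above $r$, i.e.\ only on the event one is trying to estimate. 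Your proposed fix (``deterministic energy estimates on finite domains together with a priori stationary moment control'') does not close this: stationarity gives the annealed bound $\langle|\nabla\phi|^2\rangle\lesssim 1$, not a pathwise bound on every ball, and Caccioppoli on a finite ball reintroduces the Campanato norm of $(\phi,\sigma)$ that you are trying to control. Concentration of measure, however, requires an \emph{unconditional} almost-sure Lipschitz bound. The paper's resolution is the truncation $\bar F_r:=\min\{\sup_{m,R\ge r}|F_{m,R}|,\tfrac1{C_0}\}$, whose Malliavin derivative vanishes exactly where the conditional estimate would fail, followed by a separate bootstrap (soft ergodicity to get a finite minimal radius $r_0$ with $\langle\bar F_r\rangle$ small, then a quantitative argument via further cut-off functionals $\bar F_{n,r}$ to show $r_0\lesssim1$). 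Without some such mechanism your Part (i) does not go through. Relatedly, your explanation of the restriction $M\le1$ is incorrect: it has nothing to do with the Meyer window, but with the cost $\exp(-r^\beta/C)$ of undoing the truncation, which is dominated by $\exp(-r^\beta M^2/C)$ only for $M\lesssim1$.

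A smaller but real issue in Part (ii): a finite family of functionals supported on $\{|x|\le r_n\}$ (centered at the origin) controls only the oscillation of averages centered at the origin, not the full $L^2$ Campanato seminorm $\dashint_{|x|\le R}|(\phi,\sigma)-\dashint(\phi,\sigma)|^2$ appearing in (\ref{o62}). The paper's Haar-type decomposition uses functionals $F_{Q,n}$ attached to \emph{all} $(\tfrac{R}{r})^d$ sub-cubes $Q$ of level $r$ of $(-R,R)^d$, and the union bound must beat this polynomial entropy factor — which it does because the relevant threshold grows like $(\tfrac{R}{r})^{1-\beta/2}$ inside the exponential. Your sketch omits both the sub-cube count and the verification that the exponential wins against it.
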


Morally speaking, Theorem \ref{main} converts statistical information on the coefficient field $a$
(or rather $\tilde a$)
into statistical information on the coefficient field $\nabla\phi:=\nabla(\phi_1,\cdots,\phi_d)$ related by
(\ref{f4}). Despite the nonlinearity of the map $a\mapsto\nabla\phi$, which only
in its linearization around $a={\rm id}$ turns into the Helmholtz projection, Theorem \ref{main}
states that $\nabla\phi$ essentially inherits the statistics of $a$: 
(\ref{o72}) implies in particular that spatial averages $F=\av_{|x|\le r}\nabla\phi dx$ of $\nabla\phi$
satisfy the same bounds as if $\nabla\phi$ itself was Gaussian with correlation decay (\ref{o1}).
On the level of these Gaussian bounds, the only prize to pay for nonlinearity is the restriction
$M\lesssim 1$ in (\ref{o72}) on the threshold.

\medskip

Incidentally, the way we obtain ii) from i) bears similarities with an argument in
\cite{ArmstrongKuusiMourrat} in the sense that a decomposition into Haar wavelets
is implicitly used.

\medskip

To obtain an estimate like \eqref{o72}, the starting point of our proof is the Gaussian concentration of measure applied to $\tilde a$.
%
\begin{proposition}[Concentration of Measure, cf.\ e.g.\ {\cite[Proposition 2.18]{Ledoux}})]
\label{ConcentrationOfMeasure}
Let $\tilde a=\tilde a(x)$ be a tensor-valued Gaussian random field on
$\mathbb{R}^d$ that is centered and stationary;
denote its covariance operator by $\operatorname{Cov}$.
Consider a random variable $F$, that is, a function(al) $F=F(\tilde a)$. Suppose
that $F$ is 1-Lipschitz in the sense that its functional derivative,
or rather its Fr\'echet derivative with respect to $L^2(\mathbb{R}^d;\mathbb{R}^{d\times d})$,
$\frac{\partial F}{\partial\tilde a}=\frac{\partial F}{\partial\tilde a}(\tilde a,x)$,
which can be considered a random tensor field and assimilated with a Malliavin derivative, satisfies
\begin{equation}\label{o2}
\int_{\mathbb{R}^d}\frac{\partial F}{\partial \tilde a}(\tilde a,x)
(\mbox{Cov}\frac{\partial F}{\partial \tilde a}(\tilde a,\cdot))(x)dx
\le 1\quad\mbox{for almost every}\;\tilde a.
\end{equation}

Then $F$ has Gaussian moments in the sense of
\begin{equation}\label{GaussianMoments}
\langle\exp(M (F-\langle F\rangle))\rangle\le\exp(\frac{M^2}{2})
\quad\mbox{for all}\;M\ge 0.
\end{equation}
Furthermore, for any $M\geq 0$ we have the estimate
\begin{equation}\label{o56}
\langle I(|F-\langle F\rangle|\ge M)\rangle\le 2\exp(-\frac{M^2}{2}).
\end{equation}
\end{proposition}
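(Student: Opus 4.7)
My plan is to apply the Gaussian logarithmic Sobolev inequality (LSI) associated with $\tilde a$, then close the loop via Herbst's argument to obtain (\ref{GaussianMoments}); finally, (\ref{o56}) follows from Markov's inequality and symmetry. The LSI in question states that for any sufficiently smooth functional $g=g(\tilde a)$,
\begin{equation*}
\bigl\langle g^2\log g^2\bigr\rangle-\bigl\langle g^2\bigr\rangle\log\bigl\langle g^2\bigr\rangle
\;\le\;2\,\Bigl\langle\int_{\mathbb{R}^d}\tfrac{\partial g}{\partial\tilde a}(\tilde a,x)\,\bigl(\operatorname{Cov}\tfrac{\partial g}{\partial\tilde a}(\tilde a,\cdot)\bigr)(x)\,dx\Bigr\rangle,
\end{equation*}
which is the Gaussian LSI for the centered Gaussian measure of covariance $\operatorname{Cov}$; it is classical (Gross) and, in the present infinite-dimensional setting, obtained from the finite-dimensional Gaussian LSI by a standard finite-dimensional approximation procedure on the Cameron--Martin space of $\tilde a$.

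The first substantive step is to insert $g=\exp(\tfrac{M}{2}F)$ for $M>0$ into this inequality. By the chain rule for the Malliavin/Fr\'echet derivative, $\tfrac{\partial g}{\partial\tilde a}=\tfrac{M}{2}g\,\tfrac{\partial F}{\partial\tilde a}$, so that the Dirichlet-form expression on the right-hand side is controlled pointwise in $\tilde a$ using (\ref{o2}):
\begin{equation*}
\tfrac{M^2}{4}\Bigl\langle e^{MF}\int\tfrac{\partial F}{\partial\tilde a}(\tilde a,x)\bigl(\operatorname{Cov}\tfrac{\partial F}{\partial\tilde a}(\tilde a,\cdot)\bigr)(x)\,dx\Bigr\rangle\;\le\;\tfrac{M^2}{4}\bigl\langle e^{MF}\bigr\rangle.
\end{equation*}
Combining this with the algebraic identity $\bigl\langle MF\,e^{MF}\bigr\rangle-\bigl\langle e^{MF}\bigr\rangle\log\bigl\langle e^{MF}\bigr\rangle=\bigl\langle e^{MF}\bigr\rangle\bigl(MH'(M)-H(M)\bigr)$, where $H(M):=\log\bigl\langle e^{MF}\bigr\rangle$, rewrites the LSI as the differential inequality
\begin{equation*}
MH'(M)-H(M)\;\le\;\tfrac{M^2}{2}.
\end{equation*}

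The closing step is Herbst's observation that this inequality is equivalent to $\tfrac{d}{dM}\bigl(H(M)/M\bigr)\le\tfrac{1}{2}$; since $H(M)/M\to\langle F\rangle$ as $M\downarrow 0$, integrating from $0^+$ yields $H(M)\le M\langle F\rangle+\tfrac{M^2}{2}$, which is exactly (\ref{GaussianMoments}). The tail bound (\ref{o56}) then follows from Markov's inequality: for any $M\ge 0$ one has $\langle I(F-\langle F\rangle\ge M)\rangle\le\exp(-MM'+\tfrac{M'^2}{2})$ for every $M'>0$, and optimizing in $M'$ (the optimum is $M'=M$) gives $\exp(-\tfrac{M^2}{2})$; the same bound holds for the lower tail since $-F$ satisfies the same hypothesis (\ref{o2}), and a union bound produces the factor $2$ in (\ref{o56}). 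The only nontrivial input to this plan is the Gaussian LSI itself on the Cameron--Martin space of $\tilde a$, but this is a textbook result, which is why the authors simply cite \cite{Ledoux} rather than reproduce the argument.
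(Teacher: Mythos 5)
Your proposal is correct and ultimately coincides with the paper's argument: the paper simply cites \cite[Proposition 2.18]{Ledoux} for the exponential moment bound (\ref{GaussianMoments}), and your LSI-plus-Herbst derivation is precisely the standard proof of that cited result, while your passage from (\ref{GaussianMoments}) to (\ref{o56}) via exponential Chebyshev applied to $\pm F$ (with the optimal choice $M'=M$) is exactly what the paper does. The only point worth flagging is that the Herbst step tacitly assumes $\langle e^{MF}\rangle<\infty$ and enough smoothness to apply the LSI to $e^{MF/2}$, which in this infinite-dimensional setting requires the usual truncation/finite-dimensional approximation; since you acknowledge this and the paper delegates the same issue to the citation, this is not a gap.
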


We now substitute our assumption (\ref{o2}) on the Fr\'echet derivative by a stronger but more tractable condition.
\begin{lemma}\label{SimplifiedConditionMalliavinDerivative}
Let $\tilde a=\tilde a(x)$ be a tensor-valued Gaussian random field on
$\mathbb{R}^d$ that is centered and stationary;
denote its covariance operator as $\operatorname{Cov}$ and suppose that for some $\beta\in (0,d)$ we have the bound
\begin{align*}
\big|\langle\tilde a(x)\otimes \tilde a(0)\rangle\big|\le |x|^{-\beta}\quad\mbox{for all}\;x\in\mathbb{R}^d.
\end{align*}
Let $\Phi:\mathbb{R}^{d\times d} \rightarrow \mathbb{R}^{d\times d}$ be a $1$-Lipschitz map; denote the probability distribution of $\Phi(\tilde a)$ as $\langle\cdot\rangle$.
Consider a functional $F$ on the space of tensor fields $\tilde a$ of the form $F=F(a)$ with $a(x):=\Phi(\tilde a(x))$; we shall use the abbreviation $F(\tilde a)$ for $F(\Phi(\tilde a))$. Let $q\in (1,2)$ be given by
\begin{align}\label{o12}
\frac{1}{q}=1-\frac{\beta}{2d}
\end{align}
and suppose that the Fr\'echet derivative of $F$ with respect to $L^2(\mathbb{R}^d;\mathbb{R}^{d\times d})$ satisfies
\begin{equation}\label{o46}
\Big(\int|\frac{\partial F}{\partial a}|^q dx\Big)^\frac{2}{q}\ll 1
\quad\mbox{for }\langle\cdot\rangle\text{-almost every }\;a.
\end{equation}
Then the estimate (\ref{o2}) is satisfied, i.e. we have
\begin{equation*}
\int_{\mathbb{R}^d}\frac{\partial F}{\partial \tilde a}(\tilde a,x)
(\mbox{Cov}\frac{\partial F}{\partial \tilde a}(\tilde a,\cdot))(x)dx
\leq 1\quad\mbox{for almost every}\;\tilde a.
\end{equation*}
\end{lemma}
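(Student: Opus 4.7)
The plan is to reduce the bilinear form in \eqref{o2} to a convolution estimate of Hardy--Littlewood--Sobolev (HLS) type, whose conjugate exponent is designed precisely to match the exponent $q$ from \eqref{o12}.

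\textbf{Step 1: Chain rule.} Since $a(x) = \Phi(\tilde a(x))$ is a pointwise (Nemytskii) operation, the Fr\'echet derivatives are related pointwise by
\[
\frac{\partial F}{\partial \tilde a}(\tilde a, x) = \Phi'(\tilde a(x))^{\ast}\,\frac{\partial F}{\partial a}(a, x).
\]
As $\Phi$ is $1$-Lipschitz, $|\Phi'| \le 1$ almost everywhere, so
\[
\Bigl|\frac{\partial F}{\partial \tilde a}(\tilde a, x)\Bigr| \le \Bigl|\frac{\partial F}{\partial a}(a, x)\Bigr|
\quad \text{for a.e. } x.
\]
Lipschitz regularity of $\Phi$ is enough for this chain rule on the Malliavin/Fr\'echet side after mollification; I would just take it for granted here.

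\textbf{Step 2: Pointwise bound on the covariance action.} Denoting $g(x) := \bigl|\tfrac{\partial F}{\partial \tilde a}(\tilde a, x)\bigr|$, the hypothesis on the covariance kernel $\operatorname{Cov}(x,y) = \langle \tilde a(x) \otimes \tilde a(y)\rangle$ together with stationarity yields the pointwise estimate
\[
\Bigl|\bigl(\operatorname{Cov}\tfrac{\partial F}{\partial \tilde a}\bigr)(x)\Bigr|
\le \int_{\mathbb{R}^d} |x-y|^{-\beta}\,g(y)\,dy
= (|\cdot|^{-\beta} \ast g)(x).
\]
Hence
\[
\int_{\mathbb{R}^d}\frac{\partial F}{\partial \tilde a}\cdot\bigl(\operatorname{Cov}\tfrac{\partial F}{\partial \tilde a}\bigr)\,dx
\le \int_{\mathbb{R}^d} g\,(|\cdot|^{-\beta} \ast g)\,dx.
\]

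\textbf{Step 3: Duality plus HLS.} Apply H\"older with conjugate exponents $q$ and $q' = q/(q-1)$ to get
\[
\int g\,(|\cdot|^{-\beta} \ast g)\,dx
\le \|g\|_{L^q(\mathbb{R}^d)}\,\bigl\||\cdot|^{-\beta} \ast g\bigr\|_{L^{q'}(\mathbb{R}^d)}.
\]
The HLS inequality applies with $\alpha=\beta \in (0,d)$ to $|\cdot|^{-\beta}\in L^{d/\beta,\infty}$ as long as the exponents satisfy $\frac{1}{q}-\frac{1}{q'}=1-\frac{\beta}{d}$. This is exactly the choice \eqref{o12}: it gives $\frac{2}{q}=2-\frac{\beta}{d}$, i.e.\ $\frac{1}{q}=1-\frac{\beta}{2d}$, and automatically $q \in (1,2)$, $q' \in (2,\infty)$, so HLS yields
\[
\bigl\||\cdot|^{-\beta} \ast g\bigr\|_{L^{q'}(\mathbb{R}^d)} \le C\,\|g\|_{L^q(\mathbb{R}^d)}.
\]

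\textbf{Step 4: Conclusion.} Combining Steps 1--3 and using the pointwise bound $g \le |\partial F/\partial a|$,
\[
\int_{\mathbb{R}^d}\frac{\partial F}{\partial \tilde a}\cdot\bigl(\operatorname{Cov}\tfrac{\partial F}{\partial \tilde a}\bigr)\,dx
\le C\,\Bigl(\int_{\mathbb{R}^d}\bigl|\tfrac{\partial F}{\partial a}\bigr|^q\,dx\Bigr)^{2/q} \ll 1,
\]
where the ``$\ll 1$'' step absorbs the HLS constant $C$ (depending only on $d$ and $\beta$) into the hidden constant in \eqref{o46}. This is the desired inequality \eqref{o2}.

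\textbf{Main obstacle.} There is no deep difficulty; the content is really the matching of exponents so that the HLS inequality is self-dual at the pair $(q,q')$ with $|\cdot|^{-\beta}$, which is exactly what \eqref{o12} encodes. The only places where care is required are (i) justifying the chain rule given that $\Phi$ is merely Lipschitz, and (ii) noting that the HLS inequality must be used in place of Young's convolution inequality because the kernel $|x|^{-\beta}$ is not integrable for $\beta \in (0,d)$.
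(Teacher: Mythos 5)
Your proposal is correct and follows essentially the same route as the paper's proof: Hölder's inequality to split off an $L^q$ norm, the pointwise domination of the covariance action by convolution with $|x|^{-\beta}$, the Hardy--Littlewood--Sobolev inequality with the exponent matching that forces \eqref{o12}, and finally the chain rule together with the $1$-Lipschitz bound on $\Phi'$. The only cosmetic difference is that you perform the chain rule first while the paper applies it last; the content is identical.
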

We observe that if $q$ and $\beta$ are related by (\ref{o12}), as $\beta\uparrow d$ we have $q\uparrow 2$ and for $\beta\downarrow0$ we have $q\downarrow1$. 

\medskip

For linear functionals of (the gradient of) the corrector (which are therefore nonlinear functionals of the coefficient field $a$), we now establish an explicit representation of the Fr\'echet derivative; this will aid us in verifying the Lip\-schitz condition \eqref{o46} and thus ultimately the concentration of measure statements \eqref{GaussianMoments} and \eqref{o56} for (an appropriate modification of) such functionals.
\begin{lemma}\label{RepresentationMalliavin}
Consider a linear functional on $L^\frac{p}{p-1}(\mathbb{R}^d;\mathbb{R}^d)$ of the form
\begin{align}
\label{LinearFunctional}
Fh:=\int_{\mathbb{R}^d} g\cdot h\,dx,
\end{align}
where $g\in L^p(\mathbb{R}^d;\mathbb{R}^d)$, $p\geq 2$, and $\supp g\subset \{|x|\leq r\}$ for some $r\geq 1$.
Let $a$ be some coefficient field subject to the ellipticity and boundedness conditions (\ref{aEllipticity}), (\ref{aBoundedness}). Then the following two assertions hold:

\begin{itemize}
\item[1)] Consider the Fr\'echet derivative $\frac{\partial F}{\partial a}$ of the functional $F:=F\nabla \sigma_{ijk}$ (note that this functional is nonlinear in $a$, although it is linear in $\sigma_{ijk}$) at $a$ (for some fixed $i,j,k$). Introduce the decaying solutions $v$, $\tilde v_{jk}$ to the equations
\begin{equation}\label{o7}
-\triangle v=\nabla\cdot g
\end{equation}
and (where $a^\ast$ denotes the transpose of $a$)
\begin{equation}\label{o9}
-\nabla\cdot a^\ast \big(\nabla \tilde v_{jk}+(\partial_j ve_k-\partial_k ve_j)\big)=0.
\end{equation}
We then have the representation
\begin{equation}\label{o6}
\frac{\partial F}{\partial a}(a)=\big(\partial_jve_k-\partial_kve_j+\nabla\tilde v_{jk}\big)\otimes (\nabla\phi_i+e_i).
\end{equation}
\item[2)] Consider the Fr\'echet derivative $\frac{\partial F}{\partial a}$ of the functional $F:=F\nabla \phi_i$ at $a$. Introduce the decaying solution $\overline{v}$ to the equation (again, $a^\ast$ denoting the transpose of $a$)
\begin{equation}\label{oDefOverlineV}
-\nabla\cdot a^\ast \nabla \overline{v} = \nabla \cdot g.
\end{equation}
We then have the representation
\begin{align}
\label{RepresentationDelFPhi}
\frac{\partial F}{\partial a}(a) = \nabla \overline{v} \otimes (\nabla \phi_i + e_i).
\end{align}
\end{itemize}
\end{lemma}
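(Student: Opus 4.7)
My plan is to derive both representations by a standard linearization plus duality computation. The core idea is to read off $\frac{\partial F}{\partial a}$ from the first‑order response $\delta F$ to an infinitesimal perturbation $\delta a$: I will (i) linearize the defining equations of the corrector and of $\sigma$ to obtain PDEs for $\delta\phi_i$ and $\delta\sigma_{ijk}$ whose right-hand sides depend on $\delta a$, (ii) use the auxiliary decaying solutions $\overline v$, $v$, $\tilde v_{jk}$ as dual test functions, and (iii) integrate by parts to move all derivatives off $\delta\phi_i$ (and $\delta\sigma_{ijk}$), turning $\delta F$ into an explicit pairing of $\delta a$ against a tensor built from the correctors and the auxiliary functions.

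I would begin with part 2) because it contains the essential move. Linearizing \eqref{f4} gives $-\nabla\cdot(a\nabla\delta\phi_i)=\nabla\cdot(\delta a\,(\nabla\phi_i+e_i))$. Testing \eqref{oDefOverlineV} against $\delta\phi_i$ (and using $\supp g\subset\{|x|\le r\}$) yields $\int g\cdot\nabla\delta\phi_i\,dx=-\int\nabla\overline v\cdot a\nabla\delta\phi_i\,dx$, while testing the $\delta\phi_i$-equation against $\overline v$ yields $\int\nabla\overline v\cdot a\nabla\delta\phi_i\,dx=-\int\nabla\overline v\cdot\delta a(\nabla\phi_i+e_i)\,dx$. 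Combining both identities gives $\delta F=\int\bigl(\nabla\overline v\otimes(\nabla\phi_i+e_i)\bigr):\delta a\,dx$, which is \eqref{RepresentationDelFPhi}.

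For part 1) the same scheme is applied with an extra layer. Starting from \eqref{f5} and $-\triangle v=\nabla\cdot g$, two integrations by parts transfer derivatives from $\delta\sigma_{ijk}$ onto $v$ and then onto $\delta q_i=\delta a(\nabla\phi_i+e_i)+a\nabla\delta\phi_i$, producing
\begin{equation*}
\delta F=\int\bigl(\partial_jv\,e_k-\partial_kv\,e_j\bigr)\cdot\delta q_i\,dx.
\end{equation*}
The $\delta a$ part of $\delta q_i$ directly contributes $(\partial_jv\,e_k-\partial_kv\,e_j)\otimes(\nabla\phi_i+e_i)$ to $\frac{\partial F}{\partial a}$, which is the first half of \eqref{o6}. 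For the remaining $a\nabla\delta\phi_i$ part I introduce $\tilde v_{jk}$ via \eqref{o9} and run exactly the same two-test-function duality as in part 2), with $\tilde v_{jk}$ playing the role of $\overline v$ and with $\partial_jv\,e_k-\partial_kv\,e_j$ providing the inhomogeneity; this manufactures the extra $\nabla\tilde v_{jk}\otimes(\nabla\phi_i+e_i)$ term in \eqref{o6}.

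The only genuine obstacle is qualitative: one must ensure the integrations by parts on $\mathbb{R}^d$ are legitimate, i.e.\ that $v$, $\overline v$, $\tilde v_{jk}$ are well-defined decaying solutions with gradients in suitable Lebesgue spaces, and that $\nabla\delta\phi_i$ has enough integrability so that all boundary contributions at infinity vanish. Since $g\in L^p(\mathbb{R}^d;\mathbb{R}^d)$ with compact support and $p\geq 2$, standard Calderón–Zygmund and Meyers estimates applied to \eqref{o7}, \eqref{o9}, and \eqref{oDefOverlineV} produce such decaying solutions with $\nabla v,\nabla\overline v,\nabla\tilde v_{jk}\in L^p(\mathbb{R}^d)$ for $p$ slightly larger than $2$. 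A natural interpretation of the Fréchet derivative with respect to $L^2(\mathbb{R}^d;\mathbb{R}^{d\times d})$ is to take $\delta a$ to be compactly supported and square-integrable, in which case the right-hand side of the $\delta\phi_i$-equation is in $H^{-1}(\mathbb{R}^d)$ and ellipticity yields $\nabla\delta\phi_i\in L^2(\mathbb{R}^d)$, so all integrals are absolutely convergent and the above manipulations are rigorous.
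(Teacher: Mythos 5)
Your proposal is correct and follows essentially the same route as the paper: linearize \eqref{f4} (and \eqref{f5}) with respect to $a$, introduce the dual auxiliary solutions $v$, $\tilde v_{jk}$, $\overline v$, and chase derivatives off $\delta\phi_i$ and $\delta\sigma_{ijk}$ by testing the weak formulations against one another, ending with $\delta F$ as an explicit pairing against $\delta a$. The added remarks on decay/integrability needed to justify the integration by parts are sensible but not part of the paper's own (shorter) argument.
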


The previous explicit representation of the Fr\'echet derivative for certain linear functionals of (the gradient of) the corrector $(\phi,\sigma)$ enables us to verify the bound (\ref{o46}) for the Malliavin derivative, provided that a certain mean value property is satisfied for $a$-harmonic functions. Note that the latter requirement is a condition on the coefficient field $a$; in Lemma \ref{SufficientConditionMeanValueProperty} below we shall provide a sufficient condition for this property to hold.
\medskip

As the functionals which the next lemma shall be applied to are basically averages of $\nabla \phi$ or $\nabla \sigma$ over cubes of a certain scale $r$, we state the lemma in a form which makes it directly applicable in such a setting. In particular, the boundedness assumption (\ref{o90}) for the linear functional is motivated by these considerations.

\begin{lemma}\label{SmallnessMalliavinDerivative}
Let $r\geq 1$ and consider a linear functional $h\mapsto Fh$ on $L^\frac{p}{p-1}(\mathbb{R}^d;\mathbb{R}^d)$ (with $p\in (2,\infty)$) satisfying the support and boundedness condition
\begin{equation}\label{o90}
|Fh|\le
\big(\dashint_{|x|\le r}|h|^\frac{p}{p-1}dx\big)^\frac{p-1}{p}.
\end{equation}
Suppose that the constraint
\begin{align}
\label{SmallnessBetaThroughp}
2<p<2+c(d,\lambda)
\end{align}
holds (with $c(d,\lambda)>0$ to be fixed in the proof below).
Let $q\in (1,2)$ be related to $p$ through
\begin{equation}\label{o13}
\frac{1}{p}=\frac{1}{q}-\frac{1}{2}.
\end{equation}
Consider the Fr\'echet derivative $\frac{\partial F}{\partial a}$ of the functional $F:=F\nabla \sigma_{ijk}$ (or the functional $F:=F\nabla \phi_i$; note that these functionals are nonlinear functionals of $a$) at some symmetric coefficient field $a$ subject to the conditions (\ref{aEllipticity}), (\ref{aBoundedness}).

Provided that the coefficient field $a$ is such that the mean value property
\begin{align}\label{o14}
\dashint_{|x|\leq \rho} |\nabla u|^2 dx
\lesssim \dashint_{|x|\leq R} |\nabla u|^2 dx
\quad\text{for any }R\geq r\text{ and any }\rho\in [r,R]
\end{align}
holds for any $a$-harmonic function $u$ and provided that furthermore $a$ is such that
\begin{align}\label{SublinearityCorrector}
\lim_{R\rightarrow \infty} \frac{1}{R}\left(\dashint_{|x|\leq R} |(\phi,\sigma)-\dashint_{|x|\leq R} (\phi,\sigma)|^2 ~dx\right)^{1/2}=0
\end{align}
is satisfied, we have the estimate
\begin{align}
\label{o15}
\Big(\int|\frac{\partial F}{\partial a}|^qdx\Big)^\frac{2}{q}
\lesssim r^{-\frac{(p-2)d}{p}}.
\end{align}
\end{lemma}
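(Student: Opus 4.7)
The plan is to exploit the explicit Fr\'echet derivative formula from Lemma~\ref{RepresentationMalliavin}, which factorizes $\frac{\partial F}{\partial a}=E\otimes(\nabla\phi_i+e_i)$, with $E=\partial_jve_k-\partial_kve_j+\nabla\tilde v_{jk}$ in the $\sigma$ case and $E=\nabla\overline v$ in the $\phi$ case. The exponent relation $\tfrac1q=\tfrac1p+\tfrac12$ makes the pair $(p/q,2/q)$ H\"older-conjugate, so the natural attempt is to estimate $E$ in $L^p$ and $\nabla\phi_i+e_i$ in $L^2$; however, $\nabla\phi_i+e_i$ is not globally in $L^2$, which forces a dyadic spatial decomposition $A_0=\{|x|\le r\}$ and $A_k=\{2^{k-1}r\le|x|\le 2^kr\}$ for $k\ge 1$, with H\"older applied on each annulus separately.

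To handle the factor $E$ I would first use the duality of (\ref{o90}) to represent $F$ by $Fh=\int g\cdot h\,dx$ with $\supp g\subset\{|x|\le r\}$ and $\|g\|_{L^p}\lesssim r^{-d(p-1)/p}$. Standard Calder\'on--Zygmund for $-\triangle v=\nabla\cdot g$ gives $\|\nabla v\|_{L^p(\mathbb{R}^d)}\lesssim\|g\|_{L^p}$, and Meyers' estimate applied to (\ref{o9}) and (\ref{oDefOverlineV})---available precisely under the smallness condition (\ref{SmallnessBetaThroughp}) on $p$, which is how the constant $c(d,\lambda)$ is fixed---then yields $\|\nabla\tilde v_{jk}\|_{L^p}+\|\nabla\overline v\|_{L^p}\lesssim\|g\|_{L^p}$. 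For the annular decay of $E$ away from $\supp g$, the Newtonian kernel representation of $v$ gives $|\nabla v(x)|\lesssim|x|^{-d}\|g\|_{L^1}$ for $|x|\ge 2r$, hence $\|\nabla v\|_{L^p(A_k)}\lesssim 2^{-kd(p-1)/p}\|g\|_{L^p}$; for $\nabla\tilde v_{jk}$ and $\nabla\overline v$ I would upgrade the global Meyers' bound to an annular one by splitting the right-hand side dyadically, applying Meyers' to each piece, and using a Caccioppoli/reverse-H\"older bound on the $a^\ast$-harmonic far-field of each piece.

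For the factor $\nabla\phi_i+e_i$ I would invoke the mean value property (\ref{o14}) applied to the $a$-harmonic function $x\mapsto x_i+\phi_i(x)$ (which is $a$-harmonic by (\ref{f4})). Combining (\ref{o14}) with Caccioppoli on $B_{2R}$ yields $\dashint_{|x|\le\rho}|\nabla\phi_i+e_i|^2\,dx\lesssim 1+R^{-2}\dashint_{|x|\le 2R}|\phi_i-\bar\phi_i|^2\,dx$ for every $R\ge\rho\ge r$, where $\bar\phi_i$ is the average of $\phi_i$ on $B_{2R}$; letting $R\to\infty$ and invoking the sublinearity hypothesis (\ref{SublinearityCorrector}) collapses the right-hand side to a constant, so that $\dashint_{|x|\le\rho}|\nabla\phi_i+e_i|^2\,dx\lesssim 1$ for all $\rho\ge r$, which in particular gives $\|\nabla\phi_i+e_i\|_{L^2(A_k)}^2\lesssim(2^kr)^d$ uniformly in $k$.

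Assembling the pieces, H\"older on each annulus gives $\int_{A_k}|\partial F/\partial a|^q\,dx\le\|E\|_{L^p(A_k)}^q\|\nabla\phi_i+e_i\|_{L^2(A_k)}^q\lesssim 2^{-kqd(p-2)/(2p)}\,r^{qd/2}\|g\|_{L^p}^q$. Because $p>2$ the geometric series in $k$ converges and the $k=0$ term dominates; inserting $\|g\|_{L^p}^q\lesssim r^{-qd(p-1)/p}$ and raising to the power $2/q$ produces the claimed bound $r^{-d(p-2)/p}$. The main obstacle I anticipate is the annular decay for $\nabla\tilde v_{jk}$ and $\nabla\overline v$: unlike for $\nabla v$ there is no explicit kernel available, so the decay has to be extracted from Meyers' via the dyadic source decomposition plus a Caccioppoli/reverse-H\"older step, and this is exactly where the quantitative Meyers threshold $c(d,\lambda)$ appearing in (\ref{SmallnessBetaThroughp}) is consumed.
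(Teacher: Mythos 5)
Your overall architecture matches the paper's: represent $F$ by a density $g$ supported in $\{|x|\le r\}$ with $\|g\|_{L^p}\lesssim r^{-d(p-1)/p}$, use the Fr\'echet-derivative representation from Lemma~\ref{RepresentationMalliavin}, decompose into dyadic annuli, and apply H\"older with $\frac1q=\frac1p+\frac12$ to split off an $L^p$-factor (the auxiliary functions) and an $L^2$-factor ($\nabla\phi_i+e_i$). Your treatment of the $\nabla\phi_i+e_i$ factor via (\ref{o14}), Caccioppoli and (\ref{SublinearityCorrector}) is exactly the paper's, and you correctly identify that the hard point is the annular decay of $\nabla\tilde v_{jk}$ (resp.\ $\nabla\overline v$).

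That is precisely where your argument has a genuine gap. You propose to obtain the far-field decay of $\nabla\tilde v_m$ by ``a Caccioppoli/reverse-H\"older bound on the $a^\ast$-harmonic far-field of each piece.'' But Caccioppoli and reverse H\"older are \emph{local} interior estimates; applied in the exterior region $\{|x|>2^{m+1}r\}$ they compare concentric annuli but do not produce any decay \emph{rate} in $|x|$, and the hypothesis (\ref{o14}) as stated concerns $a$-harmonic functions on \emph{interior} balls, not exterior domains. There is no version of the mean value property for exterior domains available here, so the proposed mechanism does not close. The paper instead uses a duality argument which is the crux of the proof: to bound $\|\nabla\tilde v_m\|_{p,\,|x|\ge 2^n r}$ one pairs against an arbitrary $h\in L^{p/(p-1)}$ supported in $\{|x|\ge 2^n r\}$, solves the dual problem $-\nabla\cdot a\nabla w=\nabla\cdot h$, and uses $\int h\cdot\nabla\tilde v_m\,dx=\int\tilde g_m\cdot\nabla w\,dx$. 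Since $h$ vanishes on $\{|x|\le 2^n r\}$, the dual solution $w$ is $a$-harmonic there, and this is where the \emph{interior} mean value property (\ref{o14}) is invoked a second time (in its $L^{p/(p-1)}$ form (\ref{o31}), itself derived from (\ref{o14}) by a separate reverse-H\"older step with a carefully chosen power $\gamma=d/2$ in the cut-off), to estimate $\|\nabla w\|_{p/(p-1),\,|x|\le 2^{m+1}r}\lesssim(2^{n-m})^{-d+d/p}\|\nabla w\|_{p/(p-1),\,|x|\le 2^n r}$. This duality step converts an exterior-decay question, to which (\ref{o14}) does not directly apply, into an interior-mean-value question, to which it does; it is the one idea missing from your outline, and without it the rate $(2^n)^{-d+d/p}$ that makes the sum over annuli converge is not reachable.
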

Note that for $q$ related to $\beta$ through (\ref{o12}) and $p$ related to $q$ through (\ref{o13}), we have $r^{-\frac{(p-2)d}{p}}=r^{-\beta}$, i.e. by \eqref{o15} the $L^q$-norm of the Malliavin derivative decays like $r^{-\frac{\beta}{2}}$. This demonstrates that for functionals like our averages of $\nabla (\phi,\sigma)$ -- note that these functionals have vanishing expectation due to the vanishing expectation of $\nabla (\phi,\sigma)$ -- , the concentration of measure indeed improves on large scales with the desired exponent: The ``typical value'' of the average of $\nabla (\phi,\sigma)$ on some scale $r$ decays like $r^{-\frac{\beta}{2}}$.

\medskip

We now have to provide a sufficient condition for the mean value property for $a$-harmonic functions (\ref{o14}). To do so, we make use of the following result from \cite{GloriaNeukammOtto}, which provides the mean-value property assuming just an appropriate sublinearity condition on the corrector $(\phi,\sigma)$. 
\begin{proposition}[see {\cite[Lemma 2]{GloriaNeukammOtto}}]
\label{MeanValueForSmallCorrector}
There exists a constant $C_0$ only depending on dimension $d$ and ellipticity ratio $\lambda>0$
with the following property: Suppose that for an elliptic coefficient field $a$ subject to the ellipticity and boundedness conditions (\ref{aEllipticity}) and (\ref{aBoundedness}) the scalar
and vector potentials $(\phi,\sigma)$, cf.\ (\ref{f4}) and (\ref{f1}), satisfy
\begin{equation}\label{o40}
\Big(\av_{|x|\le R}|(\phi,\sigma)-\av_{|x|\le R}(\phi,\sigma)|^2dx\Big)^\frac{1}{2}\le \frac{1}{C_0}R
\quad\mbox{for all}\;R\ge r.
\end{equation}
Then for any two radii $R\ge r$ and $\rho\in [r,R]$ and any $a$-harmonic function $u$ in $\{|x|\le R\}$ we have
\begin{equation}\nonumber
\av_{|x|\le \rho}|\nabla u|^2dx\lesssim \av_{|x|\le R}|\nabla u|^2dx.
\end{equation}
\end{proposition}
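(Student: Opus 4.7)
The plan is to follow the Avellaneda--Lin large-scale regularity scheme adapted to stochastic homogenization via the extended corrector, i.e.\ to establish a one-step decay of the tilt excess
$$\mathrm{Exc}(\rho):=\inf_{\xi\in\mathbb{R}^d}\av_{|x|\le\rho}\bigl|\nabla u-\xi_i(e_i+\nabla\phi_i)\bigr|^2\,dx,$$
and iterate. Concretely, I would aim to show that for some $\theta\in(0,\tfrac12)$ depending only on $d$ and $\lambda$ one has $\mathrm{Exc}(\theta R)\le\tfrac12\mathrm{Exc}(R)$ whenever $R\ge r$ and (\ref{o40}) holds with $1/C_0$ small enough compared to $\theta$. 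Iterating this along the dyadic chain $R,\theta R,\theta^2 R,\ldots$ down to any $\rho\in[r,R]$ (the smallness hypothesis stays valid on every such radius), and combining with a crude bound on the minimizer $\xi^0$ at scale $R$ via the triangle inequality together with $\av_{|x|\le R}|e_i+\nabla\phi_i|^2\lesssim 1$ (a consequence of Caccioppoli applied to $\phi_i$ plus (\ref{o40})), delivers the mean-value property (\ref{o14}).

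The one-step decay is established by harmonic approximation through a two-scale expansion. Let $\xi^0$ minimize $\mathrm{Exc}(R)$ and let $u_{\mathrm{hom}}$ solve the constant-coefficient problem $-\nabla\cdot a_{\mathrm{hom}}\nabla u_{\mathrm{hom}}=0$ on $B_{R/2}$ with $u_{\mathrm{hom}}-\xi^0_i x_i$ matching the trace of $u-\xi^0_i(x_i+\phi_i)$. Consider the ansatz $\tilde u:=u_{\mathrm{hom}}+\phi_i\,\partial_i u_{\mathrm{hom}}$, suitably cut off near $\partial B_{R/2}$. Using (\ref{f4}) and the defining identity (\ref{f1}), a computation that exploits the skew-symmetry of $\sigma_{ijk}$ to discard $\partial_j\partial_k\sigma_{ijk}$ and $\sigma_{ijk}\partial_j\partial_k u_{\mathrm{hom}}$ recasts the residual as
$$-\nabla\cdot a\nabla(u-\tilde u)=\nabla\cdot\Bigl(\bigl(\sigma_i-\overline{\sigma_i}\bigr)\cdot\nabla\partial_i u_{\mathrm{hom}}-a(\phi_i-\overline{\phi_i})\nabla\partial_i u_{\mathrm{hom}}\Bigr)+\text{(boundary terms)}.$$
A standard energy estimate combined with (\ref{o40}) and interior bounds on $\nabla^2 u_{\mathrm{hom}}$ yields $\av_{B_{R/2}}|\nabla u-\nabla\tilde u|^2\lesssim (1/C_0)^2\av_{B_R}|\nabla u|^2$. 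On $B_{\theta R}$, Campanato-type decay for the constant-coefficient equation supplies $\inf_\zeta\av_{B_{\theta R}}|\nabla u_{\mathrm{hom}}-\zeta|^2\lesssim\theta^2\av_{B_{R/2}}|\nabla u_{\mathrm{hom}}|^2$, and inserting $\xi^1_i:=\zeta_i$ into the definition of $\mathrm{Exc}(\theta R)$ closes the one-step improvement once $\theta$ is fixed small and $C_0$ is chosen large depending on $\theta$.

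The main obstacle is the algebraic rewriting of the residual so that the extended corrector appears \emph{linearly}, i.e.\ only as $(\phi,\sigma)-\overline{(\phi,\sigma)}$, to which the smallness (\ref{o40}) can be applied without loss. This hinges on the skew-symmetry of $\sigma_{ijk}$ in the last two indices: it both makes $\partial_j\partial_k\sigma_{ijk}$ vanish identically and allows $\partial_k(\sigma_{ijk}\partial_i\partial_j u_{\mathrm{hom}})$ to be absorbed into a harmless divergence. This is precisely the structural reason why the vector potential $\sigma$, and not just the scalar $\phi$, must be controlled in (\ref{o40}); once the residual is in the above form, the remaining estimates are routine energy methods and the iteration is standard.
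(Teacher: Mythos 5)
The paper does not prove this proposition itself---it imports it verbatim from \cite{GloriaNeukammOtto} (Lemma 2)---and your sketch reproduces essentially the argument of that reference: intrinsic excess decay for the quantity $\inf_\xi\av_{|x|\le\rho}|\nabla u-\xi_i(e_i+\nabla\phi_i)|^2dx$ via two-scale harmonic approximation, with the skew-symmetry of $\sigma_{ijk}$ used to recast the residual in divergence form acting linearly on $(\phi,\sigma)-\av(\phi,\sigma)$ so that \eqref{o40} applies. The approach is correct, and the steps you elide (the boundary-layer treatment in the energy estimate for $u-\tilde u$, and the bookkeeping of the minimizing slopes $\xi$ across the dyadic scales down to $r$) are precisely the technical content of the cited lemma.
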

We shall show in the proof of the next lemma that the quantitative sublinearity condition on the corrector (\ref{o40}) may be reduced
to a smallness assumption on a certain family of linear functionals of the gradient of the corrector. Basically, these functionals will be obtained by averaging the gradient of the corrector on appropriate cubes, cf.\ the proof below. In combination with the previous proposition, we get the following lemma.

Note that this result will allow us to buckle, since by Lemma \ref{SmallnessMalliavinDerivative} and Lemma~\ref{SimplifiedConditionMalliavinDerivative}
the mean-value property (\ref{o14}) and thus ultimately (\ref{o45}) implies
\begin{equation}\nonumber
\Big(\int|\frac{\partial F_{n,R}}{\partial a}|^qdx\Big)^\frac{2}{q}\lesssim R^{-\beta}.
\end{equation}
\begin{lemma}\label{SufficientConditionMeanValueProperty}
There exist $N\lesssim 1$ linear functionals $F_1,\cdots,F_N$ satisfying
the support and boundedness condition
\begin{align*}
|F_n h|\lesssim \big(\int_{|x|\leq 1} |h|^\frac{p}{p-1} ~dx \big)^\frac{p-1}{p}
\end{align*}
such that the following holds:
Denote by $F_{n,R}$ the rescaling of $F_n$ given by $F_{n,R}h=F_n\big(h(\frac{\cdot}{R})\big)$. Let $r\geq 0$.
Provided that the condition
\begin{equation}\label{o45}
\sup_{R\ge r\;\text{dyadic};n=1,\cdots,N}F_{n,R}\nabla(\phi,\sigma)\ll 1
\end{equation}
and the condition \eqref{SublinearityCorrector}
%
are satisfied, we have the smallness estimate \eqref{o40} for the corrector;
in particular, by Proposition \ref{MeanValueForSmallCorrector} the mean value property (\ref{o14}) holds for any $a$-harmonic function $u$ on scales $\geq r$, i.e. we have
\begin{align*}
\dashint_{|x|\leq \rho} |\nabla u|^2 dx
\lesssim \dashint_{|x|\leq R} |\nabla u|^2 dx
\quad\text{for any }R\geq r\text{ and any }\rho\in [r,R].
\end{align*}
\end{lemma}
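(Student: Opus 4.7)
The plan is to construct the functionals explicitly via a Bogovskii-type divergence operator, translate the hypothesis \eqref{o45} into a smallness bound on differences of sub-ball averages of $(\phi,\sigma)$, use \eqref{SublinearityCorrector} as an anchor at a large scale, and propagate this anchor estimate down to scale $r$ by a dyadic iteration. First, I would fix a bounded-overlap covering $\{B_{1/2}(y_n)\}_{n=1}^N$ of $B_1$ by sub-balls with $N\lesssim 1$, including $y_1=0$ and centers reaching into the annulus $B_1\setminus B_{1/2}$. For each $n$, solving the divergence equation on $B_1$ yields a vector field $\psi_n$, supported in $B_1$, with $\nabla\cdot\psi_n = |B_{1/2}|^{-1}\mathbf{1}_{B_{1/2}(y_n)} - |B_1|^{-1}\mathbf{1}_{B_1}$ and $\|\psi_n\|_{L^p(B_1)}\lesssim 1$. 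Defining $F_n h := \int \psi_n\cdot h\,dx$, H\"older's inequality gives the stated boundedness $|F_n h|\lesssim \big(\int_{|x|\leq 1}|h|^{p/(p-1)}\,dx\big)^{(p-1)/p}$. A rescaling plus integration by parts then produces the identity
\[
F_{n,R}(\nabla(\phi,\sigma)) \;=\; R^{-1}\Big(\dashint_{|x|\leq R}(\phi,\sigma) - \dashint_{B_{R/2}(Ry_n)}(\phi,\sigma)\Big),
\]
which, combined with \eqref{o45}, yields the key sub-ball average bound
\[
\Big|\dashint_{B_{R/2}(Ry_n)}(\phi,\sigma)-\dashint_{|x|\leq R}(\phi,\sigma)\Big|\leq R/C
\]
for every dyadic $R\geq r$ and every $n$; denote this bound $(\ast)$.

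With $A_R^2$ denoting the squared left-hand side of \eqref{o40}, I then invoke \eqref{SublinearityCorrector} to fix a (possibly large) scale $R_\ast<\infty$ with $A_{R_\ast}\leq R_\ast/(2C_0)$; this serves as the base of a downward induction. The remaining task is to extend the bound to all dyadic $R\in[r,R_\ast]$. I would use the exact dyadic decomposition
\[
A_{2R}^2 = 2^{-d}\dashint_{|x|\leq R}|(\phi,\sigma)-c_{2R}|^2\,dx + (1-2^{-d})\dashint_{R\leq|x|\leq 2R}|(\phi,\sigma)-c_{2R}|^2\,dx,
\]
with $c_{2R}:=\dashint_{|x|\leq 2R}(\phi,\sigma)$. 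Applying $(\ast)$ at scale $2R$ for those $n$ whose sub-balls $B_R(2Ry_n)$ sit inside the annulus $B_{2R}\setminus B_R$ forces the annular $L^2$-deviation to carry a definite fraction of $A_{2R}^2$, up to an error $O(R^2/C^2)$. Combined with $A_R^2\leq \dashint_{|x|\leq R}|(\phi,\sigma)-c_{2R}|^2$, this produces a refined ratio iteration of the form $A_R^2/R^2\leq (1+\eta_C)\,A_{2R}^2/(2R)^2 + O(1/C^2)$, with $\eta_C\to 0$ as $C\to\infty$. Iterating from $R_\ast$ down to $R$ over $\log_2(R_\ast/R)$ steps, and choosing $C$ in \eqref{o45} sufficiently large---admissible since \eqref{o45} is formulated with $\ll$---gives $A_R\leq R/C_0$ uniformly on $[r,R_\ast]$, hence \eqref{o40}; Proposition~\ref{MeanValueForSmallCorrector} then delivers the mean value property \eqref{o14}.

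The hard part is the propagation step: the naive bound of $A_R$ by sub-ball oscillations inflates by a factor $2^{d/2}$ at each dyadic scale, which defeats any downward iteration. The crucial point is that the refined decomposition above, \emph{together with} the annular sub-ball bound from $(\ast)$, improves this inflation factor from $2^{d/2}$ to $1+\eta_C$. Ensuring that $\eta_C$ can be made uniformly smaller than $1/\log_2(R_\ast/R)$ via the $\ll$ in \eqref{o45}, while simultaneously anchoring at $R_\ast$ through sublinearity, is the balancing act on which the argument ultimately rests. This motivates the inclusion of annular-center sub-balls in the covering chosen at the very start.
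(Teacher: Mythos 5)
Your construction of the functionals via a Bogovskii field, and the resulting identification of $F_{n,R}\nabla(\phi,\sigma)$ with normalized differences of sub-ball averages, is fine and close in spirit to the paper's choice (averages over sub-cubes). But the propagation step contains a genuine gap. The claim that $(\ast)$ at scale $2R$ ``forces the annular $L^2$-deviation to carry a definite fraction of $A_{2R}^2$'' is false: $(\ast)$ controls only the \emph{averages} of $(\phi,\sigma)$ over balls of radius $R$, and by Jensen this yields a \emph{lower} bound on the annular $L^2$-deviation by a small quantity --- which is vacuous --- not the upper bound on $\dashint_{|x|\le R}|(\phi,\sigma)-c_{2R}|^2$ that your refined ratio inequality requires. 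For a general function, all the $L^2$-oscillation can be concentrated at scales well below $R$ inside $B_R$ while every sub-ball average at scales $\ge 2R$ is essentially $c_{2R}$; then $A_R^2=2^dA_{2R}^2$ and the inflation factor $2^{d/2}$ is not improved at all. What rules this out for the corrector is the PDE: the paper's one-step estimate \eqref{o43} carries the extra term $\delta\big(\dashint_{|x|\le\rho}|\nabla(\phi,\sigma)|^2\big)^{1/2}$, obtained from Poincar\'e on sub-cubes of the \emph{finer} side length $\delta\rho$ (so the functionals probe scales below $\rho$, not $\rho/2$), and this gradient term is then converted back into the oscillation at scale $2\rho$ via Caccioppoli applied to \eqref{f4} and \eqref{f5}, cf.\ \eqref{o89}. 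Your argument never invokes the equations, so it cannot close.

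Even granting your per-step inequality, the iteration scheme does not survive: you accumulate a factor $(1+\eta_C)^{\log_2(R_\ast/R)}$, and the anchor scale $R_\ast$ supplied by \eqref{SublinearityCorrector} is realization-dependent and unbounded, whereas the constant implicit in the $\ll$ of \eqref{o45} must be universal. One cannot choose $\eta_C$ smaller than $1/\log_2(R_\ast/R)$ uniformly. The paper's iteration is structured precisely to avoid this: the coarser-scale term enters with the geometric prefactor $\delta$, so iterating gives $D_m\lesssim\sup|F|+\delta+\delta^{m_0+1}D_{m+m_0+1}$, and the remainder is killed by letting $m_0\to\infty$ using sublinearity --- no uniform-in-the-number-of-steps smallness is needed. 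To repair your argument you would need to (i) take sub-balls of radius $\delta R$ with $\delta$ a small universal constant so that Poincar\'e leaves only a $\delta\|\nabla(\phi,\sigma)\|_{L^2}$ remainder, (ii) control that remainder by Caccioppoli using the corrector equations, and (iii) reorganize the induction so that the higher-scale term appears with a factor $\delta<1$ rather than $1+\eta_C$.
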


With these preparations, we are able to establish our main theorem.
The main technical difficulty in the proof below is that our estimate
\begin{align*}
\Big(\int \big|\frac{\partial F}{\partial a}\big|^q dx \Big)^\frac{2}{q}
\lesssim r^{-\beta}
\end{align*}
for the Malliavin derivative of linear functionals of the gradient of the corrector (cf. (\ref{o15})) is a conditional bound: It relies on the assumption that the mean-value property (\ref{o14}) holds for $a$-harmonic functions on scales larger than $r$. For the concentration of measure estimate (\ref{GaussianMoments}), however, an unconditional estimate of the form (\ref{o2}) or (\ref{o46}) (the latter being a proxy for (\ref{o2})) is needed. By Lemma \ref{SufficientConditionMeanValueProperty} we know that the mean-value property holds, provided that for a certain family of linear functionals of the corrector the smallness estimate
\begin{align*}
\sup_{R\geq r~\text{dyadic}; n=1,\cdots,N} F_{n,R} \nabla(\phi,\sigma) \leq \frac{1}{C_0}
\end{align*}
is satisfied ($C_0$ being a universal constant). To circumvent this problem, in the proof below we therefore introduce the family of functionals
\begin{align*}
\bar F_r := \min\Big\{\sup_{R\geq r~\text{dyadic}; n=1,\cdots,N} F_{n,R} \nabla(\phi,\sigma),\frac{1}{C_0}\Big\},
\end{align*}
for which by design the unconditional bound for the Malliavin derivative
\begin{align*}
\Big(\int \big|\frac{\partial \bar F_r}{\partial a}\big|^q dx \Big)^\frac{2}{q}
\lesssim r^{-\beta}
\end{align*}
holds. Therefore, concentration of measure is applicable to $\bar F_r$. The remainder of the proof of the first part of our theorem below is dedicated to handling the (a priori unknown) expectation $\langle\bar F_r\rangle$.

\medskip The proof of the second assertion of our main theorem will mainly rely on the first assertion of the theorem as well as the quantitative improvement of the Malliavin derivative of averages of $(\nabla \phi,\nabla \sigma)$ on larger scales, as captured by the estimate (\ref{o15}).

\section{Concentration of Measure and Estimates of the Malliavin Derivative}

\subsection{Concentration of Measure}

\begin{proof}[Proof of Proposition \ref{ConcentrationOfMeasure}]
For the proof of the concentration of measure estimate (\ref{GaussianMoments}), we refer the reader to \cite[Proposition 2.18]{Ledoux}.
We now establish (\ref{o56}).
By Chebychev's inequality, (\ref{GaussianMoments}) implies $\langle I(F-\langle F\rangle\ge M)\rangle \le\exp(-\frac{M^2}{2})$. In combination with the same estimate with $F$ replaced by $-F$, we obtain (\ref{o56}).
\end{proof}

\begin{proof}[Proof of Lemma \ref{SimplifiedConditionMalliavinDerivative}]
We need to verify that the condition (\ref{o2}) is implied by the assumption (\ref{o46}).

To do so, we first note that by H\"older's inequality we have for any exponent $1<q<\infty$
\begin{eqnarray*}
\int\frac{\partial F}{\partial \tilde a}
\mbox{Cov}\frac{\partial F}{\partial \tilde a}dx
&\le&\Big(\int|\frac{\partial F}{\partial \tilde a}|^qdx\Big)^\frac{1}{q}
\Big(\int|\mbox{Cov}\frac{\partial F}{\partial \tilde a}|^\frac{q}{q-1}dx\Big)^\frac{q-1}{q}.
\end{eqnarray*}
Since $\mbox{Cov}$ is the convolution with $\langle\tilde a(x)\tilde a(0)\rangle$ and since we have the bound $|\langle\tilde a(x)\otimes \tilde a(0)\rangle|\leq |x|^{-\beta}$,
we have for the second factor
\begin{eqnarray*}
\Big(\int|\mbox{Cov}\frac{\partial F}{\partial \tilde a}|^\frac{q}{q-1}dx\Big)^\frac{q-1}{q}
&\le&\Big(\int\Big|\int\frac{1}{|x-y|^\beta}
|\frac{\partial F}{\partial \tilde a}(y)|dy\Big|^\frac{q}{q-1}dx\Big)^\frac{q-1}{q},
\end{eqnarray*}
which allows us to use the Hardy-Littlewood-Sobolev inequality
\begin{eqnarray*}
\Big(\int\Big|\int\frac{1}{|x-y|^\beta}
|\frac{\partial F}{\partial \tilde a}(y)|dy\Big|^\frac{q}{q-1}dx\Big)^\frac{q-1}{q}
&\lesssim&\Big(\int|\frac{\partial F}{\partial \tilde a}|^qdx\Big)^\frac{1}{q},
\end{eqnarray*}
provided the exponents $q$ and $\beta$ are related by (\ref{o12}).
From this string of inequalities we learn that (\ref{o2}) also holds provided
\begin{equation}\label{o4}
\Big(\int|\frac{\partial F}{\partial\tilde a}|^qdx\Big)^\frac{2}{q}\ll 1
\quad\mbox{for almost every}\;\tilde a.
\end{equation}
We now change variables according to $a(x)=\Phi(\tilde a(x))$; by the chain rule for $F(\tilde a)=F(\Phi(\tilde a))$
we have
$\frac{\partial F}{\partial\tilde a}(\tilde a,x)=\Phi'(\tilde a(x))\frac{\partial F}{\partial a}(a,x)$,
so that by the 1-Lipschitz continuity of $\Phi$, our assumption (\ref{o46})
implies (\ref{o4}) and thus (\ref{o2}).
\end{proof}

\subsection{Representation of the Malliavin derivative}

\begin{proof}[Proof of Lemma \ref{RepresentationMalliavin}]
We first give the argument for the ``vector potential'' $\sigma$, fixing a component $\sigma_{ijk}$.
Consider a functional of the form $F:=F\nabla \sigma_{ijk}$ with $Fh$ as in (\ref{LinearFunctional}). We claim that the Fr\'echet derivative of $F$ with respect to $a$ is given by (\ref{o6}) where the functions $v=v(x)$ and $\tilde v_{jk}=\tilde v_{jk}(a,x)$ are determined as the decaying solutions of the elliptic equations (\ref{o7}) and (\ref{o9}).
%

Computing the functional derivative of $F$ as a function of $a$ amounts
to a linearization. We thus consider an arbitrary tensor field $\delta a=\delta a(x)$, 
which we think of as an infinitesimal perturbation of $a$, and which thus generates infinitesimal perturbations
$\delta\phi$ and $\delta\sigma$ of $\phi$ and $\sigma$ according to (\ref{f4}), (\ref{DefinitionCurrent}), and (\ref{f5}), that is,
\begin{equation}\label{o10}
-\nabla\cdot(a\nabla\delta\phi_i+\delta a(\nabla\phi_i+e_i))=0
\end{equation}
and
\begin{equation}\label{o8}
-\triangle\delta\sigma_{ijk}=\partial_j\big(\delta a(\nabla\phi_i+e_i)+a\nabla\delta\phi_i\big)_k
-\partial_k\big(\delta a(\nabla\phi_i+e_i)+a\nabla\delta\phi_i\big)_j.
\end{equation}
In terms of the infinitesimal perturbation $\delta F$ of $F$, this implies by integration by parts
(or rather by directly appealing to the weak Lax-Milgram formulations of the elliptic equations)
\begin{eqnarray*}
\delta F
&=&\int g\cdot \nabla\delta\sigma_{ijk} dx\\
&\stackrel{(\ref{o7})}{=}&-\int \nabla v \cdot \nabla \delta\sigma_{ijk} dx\\
&\stackrel{(\ref{o8})}{=}&\int(\partial_jve_k-\partial_kve_j)\cdot
\big(\delta a(\nabla\phi_i+e_i)+a\nabla\delta\phi_i\big) dx
\\
&\stackrel{(\ref{o9})}{=}&\int(\partial_jve_k-\partial_kve_j)\cdot\delta a(\nabla\phi_i+e_i)dx
-\int\nabla\tilde v_{jk}\cdot a\nabla\delta\phi_i dx\\
&\stackrel{(\ref{o10})}{=}&\int\big(\partial_jve_k-\partial_kve_j+\nabla\tilde v_{jk}\big)\cdot\delta a(\nabla\phi_i+e_i)dx,
\end{eqnarray*}
which is nothing else than (\ref{o6}).

\medskip

Let us now establish the second part of our lemma. Consider a functional of the scalar potential of the form $F:=F\nabla \phi_i$.
To represent its Fr\'echet derivative, introduce the decaying solution $\overline{v}$ to the equation (\ref{oDefOverlineV}).
We observe that the variation of $F$ with respect to $a$ is given by
\begin{eqnarray*}
\delta F &=& \int g \cdot \nabla \delta \phi_i dx
\\
&\stackrel{(\ref{oDefOverlineV})}{=}&-\int a^\ast\nabla \overline{v}
\cdot \nabla \delta \phi_i dx
\\
&\stackrel{(\ref{o10})}{=}&\int \nabla \overline{v} \cdot
\delta a(\nabla \phi_i+e_i) dx,
\end{eqnarray*}
which leads to the conclusion (\ref{RepresentationDelFPhi}).
\end{proof}


\subsection{Sensitivity estimate}

\begin{proof}[Proof of Lemma \ref{SmallnessMalliavinDerivative}]
We now argue that under certain boundedness assumptions on $F=Fh$
as a linear functional in vector fields $h=h(x)$,
we control the size (\ref{o46}) of its Fr\'echet derivative 
$\frac{\partial F}{\partial a}=\frac{\partial F}{\partial a}(a,x)$ 
as a nonlinear functional $F\nabla\sigma_{ijk}=F(a)$
in coefficient fields $a=a(x)$ (and similarly in the case $F(a)=F\nabla\phi_i$; for this case, the (simpler) proof is sketched afterwards).

To this aim, let us first note that we have a Calderon-Zygmund estimate
for $-\nabla\cdot a\nabla$ with the exponents $p$ and its dual exponent $\frac{p}{p-1}$:
For any decaying function $w$ and vector field $h$ on $\mathbb{R}^d$ related by
\begin{equation}\label{o22}
-\nabla\cdot a\nabla w=\nabla\cdot h
\end{equation}
we have
\begin{equation}\label{o11}
\int|\nabla w|^\frac{p}{p-1}dx\lesssim \int|h|^\frac{p}{p-1}dx\quad\mbox{and}\quad
\int|\nabla w|^pdx\lesssim \int|h|^pdx.
\end{equation}
This assertion holds by Meyer's estimate (see e.g.\ \cite{Meyers}), which only requires the ellipticity and boundedness assumptions \eqref{aEllipticity}, \eqref{aBoundedness} on $a$ as well as the estimate $|p-2|\ll 1$, which is ensured by our condition \eqref{SmallnessBetaThroughp}. Note that an analogous estimate would hold for the dual equation $-\nabla \cdot a^\ast \nabla w=\nabla \cdot h$ if our coefficient field were nonsymmetric.

\medskip

In the following, we will use the abbreviation $\|\cdot\|_{p,B}$ for
the spatial $L^p$-norm on the set $B$; we write $\|\cdot\|_{p}$ when $B=\mathbb{R}^d$.
We start by arguing that because $\frac{p}{p-1}\in(1,2)$, (\ref{o14}) also entails
\begin{equation}\label{o31}
\av_{|x|\le \rho}|\nabla u|^\frac{p}{p-1}dx\lesssim \av_{|x|\le R}|\nabla u|^\frac{p}{p-1}dx.
\end{equation}
It is obviously enough to establish (\ref{o31}) only for $R\ge 2\rho$; hence by Jensen's
inequality, (\ref{o31}) follows from (\ref{o14}) once we establish the reverse H\"older
inequality
\begin{equation}\label{o33}
\big(\av_{|x|\le\frac{R}{2}}|\nabla u|^2dx\big)^\frac{1}{2}\lesssim\
\av_{|x|\le R}|\nabla u|dx.
\end{equation}
%
To this purpose, we test
$-\nabla\cdot a\nabla u=0$ with $\eta^{2\gamma}(u-m)$, where $\eta$ is a smooth cut-off of $\chi_{\{|x|\le \frac{R}{2}\}}$ in
$\{|x|\le R\}$ (with the property $|\nabla \eta|\lesssim \frac{1}{R}$) and where the exponent $\gamma\geq 1$ and the constant $m\in\mathbb{R}$ will be chosen later. By the ellipticity and boundedness assumptions \eqref{aEllipticity}, \eqref{aBoundedness} and Young's inequality we obtain
\begin{equation}\nonumber
\int(\eta^\gamma|\nabla u|)^2dx\lesssim \int((u-m)|\nabla\eta^\gamma|)^2dx,
\end{equation}
and thus
\begin{equation}\nonumber
\int|\nabla(\eta^\gamma(u-m))|^2dx\lesssim\int((u-m)|\nabla\eta^\gamma|)^2dx,
\end{equation}
which by the estimate on $\nabla\eta$ gives
\begin{equation}\label{o32}
\|\nabla(\eta^\gamma(u-m))\|_2\lesssim \frac{1}{R} \|\eta^{\gamma-1}(u-m)\|_2.
\end{equation}
%
%
%
On the r.\ h.\ s.\ of (\ref{o32}) we use first H\"older's inequality, then the isoperimetric inequality on $\{|x|\le R\}$ and finally Sobolev's inequality on the whole space (for simplicity, we assume $d>2$ here)
\begin{eqnarray*}\nonumber
\|\eta^{\gamma-1}(u-m)\|_2&\le&\|\eta^{\gamma}(u-m)\|_{\frac{2d}{d-2}}^\frac{\gamma-1}{\gamma}
\|u-m\|_{\frac{d}{d-1},|x|\le R}^\frac{1}{\gamma}\\
&\lesssim&\|\nabla(\eta^{\gamma}(u-m))\|_{2}^\frac{\gamma-1}{\gamma}\|\nabla u\|_{1,|x|\le R}^\frac{1}{\gamma},
\end{eqnarray*}
provided the exponent $\gamma\in (1,\infty)$ is chosen such that $\frac{1}{2}=(1-\frac{1}{\gamma})\frac{d-2}{2d}+\frac{1}{\gamma}\frac{d-1}{d}$
(which -- as a simple computation shows -- is satisfied precisely for $\gamma=\frac{d}{2}$)
and the constant $m$ is the spatial average of $u$ on $\{|x|\le R\}$. The combination of the last two estimates yields
\begin{align*}
\|\nabla(\eta^\gamma(u-m))\|_{2}\lesssim \frac{1}{R}
\|\nabla(\eta^{\gamma}(u-m))\|_{2}^\frac{\gamma-1}{\gamma}
\|\nabla u\|_{1,|x|\le R}^\frac{1}{\gamma},
\end{align*}
which (by $\gamma=\frac{d}{2}$) entails $\|\nabla u\|_{2,|x|\le \frac{R}{2}}\le\|\nabla(\eta^\gamma(u-m))\|_{2}\lesssim R^{-\frac{d}{2}} \|\nabla u\|_{1,|x|\le R}$ and thus (\ref{o33}).

\medskip

We now give the argument for (\ref{o15}) in case of a functional of the form $F\nabla \sigma_{ijk}$ (the case $F\nabla \phi_i$ will be treated below). Clearly (\ref{o90}) implies that there exists
a (deterministic) vector field $g=g(x)$ with
\begin{equation}\label{o19}
\supp g\subset \{|x|\le r\}\quad\mbox{and}\quad \|g\|_p\le r^{-\frac{p-1}{p}d}
\end{equation}
such that we have the representation for $F=Fh$ as a linear functional on vector fields $h=h(x)$
\begin{equation}
\label{RepresentFwithg}
Fh=\int g\cdot h\,dx.
\end{equation}
This gives us access to the representation (\ref{o6})
of its Fr\'echet derivative $\frac{\partial F}{\partial a}$
considered as a nonlinear functional $F\nabla\sigma_{ijk}=F(a)$ of $a$. 
Using this representation, a partition
into dyadic annuli, and H\"older's estimate (recall (\ref{o13})) we obtain
\begin{align}
&\|\frac{\partial F}{\partial a}\|_{q}
\nonumber\\
&\lesssim
\|(|\nabla v|+|\nabla\tilde v_{jk}|)|\nabla\phi_i+e_i|\|_{q,|x|\le 2r}
\nonumber
\\
&~~~
+\sum_{n=1}^\infty\|(|\nabla v|+|\nabla\tilde v_{jk}|)|\nabla\phi_i+e_i|\|_{q,2^n r\le |x|\le 2^{n+1} r}
\nonumber
\\
&\lesssim
(\|\nabla v\|_{p,|x|\le 2r}+\|\nabla\tilde v_{jk}\|_{p,|x|\le 2r})\|\nabla\phi_i+e_i\|_{2,|x|\le 2r}
\nonumber
\\
&~~~
+\sum_{n=1}^\infty(\|\nabla v\|_{p,2^n r\le|x|\le2^{n+1} r}+\|\nabla\tilde v_{jk}\|_{p,2^n r\le|x|\le2^{n+1} r})
\|\nabla\phi_i+e_i\|_{2,2^n r\le |x|\le 2^{n+1} r}.\label{o16}
\end{align}
In view of (\ref{o14}) applied to the $a$-harmonic function $u(x)=x_i+\phi_i(x)$, cf.\ (\ref{f4}), we obtain for all radii $\rho \ge r$ using Caccioppoli's inequality and (\ref{SublinearityCorrector})
%
\begin{align*}
&\int_{|x|\le \rho}|\nabla\phi_i+e_i|^2dx\lesssim \rho^d\liminf_{R\uparrow\infty}\av_{|x|\le R}|\nabla\phi_i+e_i|^2dx
\\&
\lesssim \rho^d \liminf_{R\uparrow\infty} \frac{1}{R^2} \dashint_{|x|\leq 2R} \Big|\phi_i+x_i-\dashint_{|x|\leq 2R} (\phi_i+x_i) \Big|^2 dx
\lesssim \rho^d.
\end{align*}
Hence (\ref{o16}) turns into
\begin{eqnarray}
\|\frac{\partial F}{\partial a}\|_{q}
&\lesssim& r^\frac{d}{2}(\|\nabla v\|_{p}+\|\nabla\tilde v_{jk}\|_{p})\label{o18}\\
&&+\sum_{n=1}^\infty (2^{n} r)^\frac{d}{2}(\|\nabla v\|_{p,|x|\ge2^{n} r}
+\|\nabla\tilde v_{jk}\|_{p,|x|\ge2^{n} r}).\label{o20}
\end{eqnarray}
It thus remains to estimate the auxiliary functions $v$ and $\tilde v_{jk}$. 
The estimate of the terms in line (\ref{o18}) is easy: By (\ref{o19})
and Calderon-Zygmund for (\ref{o7}) we
obtain $\|\nabla v\|_{p}\lesssim \|g\|_{p}\le r^{-\frac{p-1}{p}d}$. 
By (\ref{o11}) we have Calderon-Zygmund
with exponent $p$ for the equation (\ref{o9}), so that 
$\|\nabla\tilde v_{jk}\|_{p}\lesssim\|\nabla v\|_{p}\lesssim r^{-\frac{p-1}{p}d}$.
In order to control the terms in line (\ref{o20}), we shall establish the following estimates
for $n\in\mathbb{N}$
\begin{eqnarray}
\|\nabla v\|_{p,|x|\ge 2^n r}&\lesssim&
(2^n)^{-d+\frac{d}{p}}
r^{-\frac{p-1}{p}d}
,\label{o21}\\
\|\nabla \tilde v_{jk}\|_{p,|x|\ge 2^n r}&\lesssim&
n(2^n)^{-d+\frac{d}{p}}
r^{-\frac{p-1}{p}d}.\label{o23}
\end{eqnarray}
We note that since $p>2$, these estimates imply that the sum over $n$ in (\ref{o20}) converges and gives (\ref{o15}).

\medskip

The estimate (\ref{o21}) for the solution $v$ of the constant coefficient equation (\ref{o7})
is classical: We already argued that $\|\nabla v\|_{p}\lesssim r^{-\frac{p-1}{p}d}$; by the estimate on the support of $g$ in (\ref{o19}) we have that $v$ is harmonic in $\{|x|\ge r\}$ and that
it has vanishing flux $\int_{|x|=r}x\cdot \nabla v=0$. It thus decays as
$|\nabla v(x)|\lesssim |x|^{-d} r^{d-\frac{d}{p}}
\|\nabla v\|_{p}$ for $|x|\ge 2r$,
which in particular yields (\ref{o21}). We now turn to (\ref{o23}) and to this purpose rewrite the equation
(\ref{o9}) for $\tilde v_{jk}$ as
\begin{equation}\nonumber
-\nabla\cdot a^\ast \nabla\tilde v_{jk}=\nabla\cdot\tilde g
\end{equation}
with the r.\ h.\ s.\ $\tilde g:=-a^\ast (\partial_jve_k-\partial_kve_j)$. We already argued that 
$\|\tilde g\|_{p}\lesssim r^{-\frac{p-1}{p}d}$ and (\ref{o21}) translates into
\begin{equation}\label{o29}
\|\tilde g\|_{p,|x|\ge 2^n r}\lesssim(2^n)^{-d+\frac{d}{p}} r^{-\frac{p-1}{p}d}.
\end{equation}
In order to proceed, we split $\tilde g$ into $\{\tilde g_m\}_{m=0,1,\cdots}$ where
$\tilde g_0$ is supported in $\{|x|\le 2r\}$ and for $m\ge 1$ $\tilde g_m$ is supported
in $\{2^m r\le|x|\le 2^{m+1} r\}$, so that (\ref{o29}) translates into
\begin{equation}\label{o25}
\|\tilde g_m\|_{p}\lesssim(2^m)^{-d+\frac{d}{p}}r^{-\frac{p-1}{p}d}.
\end{equation}
This entails a splitting of $\tilde v_{jk}$ into $\{\tilde v_m\}_{m=0,1,\cdots}$,
where $\tilde v_m$ is the Lax-Milgram solution of
\begin{equation}\label{o26}
-\nabla\cdot a^\ast \nabla\tilde v_m=\nabla\cdot\tilde g_m.
\end{equation}
We will now argue that
\begin{equation}\label{o27}
\|\nabla\tilde v_m\|_{p,|x|\ge 2^{n}r}\lesssim \min\{(2^n)^{-d+\frac{d}{p}},(2^m)^{-d+\frac{d}{p}}\}r^{-\frac{p-1}{p}d},
\end{equation}
which implies the estimate (\ref{o23}) by the triangle inequality 
$\|\nabla\tilde v_{jk}\|_{p,|x|\ge 2^{n} r}\le\sum_{m=0}^\infty\|\nabla\tilde v_m\|_{p,|x|\ge 2^{n} r}$.
We note that (\ref{o25}) together with our Calderon-Zygmund estimate (\ref{o11})
applied to (\ref{o26}) yields $\|\nabla\tilde v_m\|_{p}\lesssim(2^m)^{-d+\frac{d}{p}}r^{-\frac{p-1}{p}d}$.
In order to establish (\ref{o27}), it thus remains to show
\begin{equation}\label{o30}
\|\nabla\tilde v_m\|_{p,|x|\ge 2^{n}r}\lesssim (2^n)^{-d+\frac{d}{p}}r^{-\frac{p-1}{p}d}\quad\mbox{for}\;m<n.
\end{equation}
We argue in favor of (\ref{o30}) by duality and thus consider an arbitrary $h\in L^\frac{p}{p-1}$ supported in 
$\{|x|\ge 2^n r\}$ and denote by $w$ the corresponding Lax-Milgram solution of (\ref{o22}).
By integration by parts,
we deduce from
(\ref{o22}) and (\ref{o26}) that $\int h\cdot\nabla\tilde v_m dx=\int \tilde g_m\cdot\nabla w\,dx$.
By the support condition on $\tilde g_m$ this yields
\begin{equation}\nonumber
\Big|\int h\cdot\nabla\tilde v_m dx\Big|\le\|\tilde g_m\|_{p}\|\nabla w\|_{\frac{p}{p-1},|x|\le 2^{m+1}r}.
\end{equation}
By the support assumption on $h$ we have that $w$ is $a$-harmonic in $\{|x|\le 2^n r\}$.
Since $m<n$, we may use (\ref{o31}) applied to $w$ in form of
\begin{equation}\nonumber
\|\nabla w\|_{\frac{p}{p-1},|x|\le 2^{m+1}r}\lesssim (2^{n-m})^{-d+\frac{d}{p}}\|\nabla w\|_{\frac{p}{p-1},|x|\le 2^{n}r}.
\end{equation}
We combine this with (\ref{o11}) in form of $\|\nabla w\|_{\frac{p}{p-1}}\lesssim\|h\|_{\frac{p}{p-1}}$,
and with (\ref{o25}), to obtain
\begin{equation}\nonumber
\Big|\int h\cdot\nabla\tilde v_m dx\Big|\lesssim (2^{n})^{-d+\frac{d}{p}} r^{-\frac{p-1}{p}d}\|h\|_{\frac{p}{p-1}},
\end{equation}
which gives (\ref{o30}).

\medskip

In the case of a functional of the scalar potential of the form $F(a)=F\nabla \phi_i$, we claim that the Fr\'echet derivative of $F$ is again controlled in the sense of (\ref{o15}).
The proof is mostly analogous to the previous one; we again rewrite $F$ as in (\ref{RepresentFwithg}) with some $g$ satisfying (\ref{o19}). Starting from the representation (\ref{RepresentationDelFPhi}), one derives an analogue of estimate (\ref{o16}) reading
\begin{eqnarray*}
\lefteqn{\|\frac{\partial F}{\partial a}\|_{q}}
\\
&\lesssim&\|\nabla \overline{v}\|_{p,|x|\le 2r}\|\nabla\phi_i+e_i\|_{2,|x|\le 2r}
\\
&&+\sum_{n=1}^\infty\|\nabla \overline{v}\|_{p,2^n r\le|x|\le2^{n+1} r}
\|\nabla\phi_i+e_i\|_{2,2^n r\le |x|\le 2^{n+1} r}.
\end{eqnarray*}
The second factors on the right in this estimate coincide with the ones in the case $F(a)=F\nabla \sigma_{ijk}$; therefore, we get the following analogue to estimate (\ref{o20}):
\begin{eqnarray*}
\|\frac{\partial F}{\partial a}\|_{q}
&\lesssim& r^\frac{d}{2}\|\nabla \overline{v}\|_{p}
+\sum_{n=1}^\infty (2^{n}r)^\frac{d}{2}\|\nabla \overline{v}\|_{p,|x|\ge 2^{n} r}.
\end{eqnarray*}
The equation (\ref{oDefOverlineV}) for $\overline{v}$ has the structure of the equation (\ref{o26}) with $m=0$ (including the estimate $||g||_{p} \lesssim r^{-\frac{p-1}{p}d}$ and the inclusion $\operatorname{supp} g\subset \{|x|\leq 2r\}$, cf. \eqref{o19}); therefore, the decay property (\ref{o30}) carries over to our $\overline{v}$. This establishes the estimate
\begin{align*}
\|\frac{\partial F}{\partial a}\|_{q} \lesssim r^{-\frac{p-2}{2p}d}
\end{align*}
also in the case $F(a)=F\nabla \phi_i$.
\end{proof}

\medskip

\subsection{Sufficient conditions for the mean value property in terms of linear functionals of the corrector}

%
%
%
%
%
%

\medskip

\begin{proof}[Proof of Lemma \ref{SufficientConditionMeanValueProperty}]
In order to show that (\ref{o45}) and (\ref{SublinearityCorrector}) imply (\ref{o14}), we only need to show the existence of functionals $F_1,\cdots,F_N$ such that (\ref{o45}) and (\ref{SublinearityCorrector}) imply
\begin{equation}\label{o46bis}
\frac{1}{R}\Big(\av_{|x|\le R}|(\phi,\sigma)-\av_{|x|\le R}(\phi,\sigma)|^2 dx\Big)^\frac{1}{2}\ll 1
\quad\mbox{for all}\;R\ge r.
\end{equation}
By Proposition \ref{MeanValueForSmallCorrector}, the estimate (\ref{o14}) follows from (\ref{o46bis}).

\medskip

Let us now give the argument for (\ref{o46bis}). First, it is clearly enough to show  that for any $0<\delta\ll 1$, there exists $N\lesssim\delta^{-d}$ functionals $F_1,\cdots,F_N$ on vector fields which are bounded in the sense of
\begin{equation}\label{o93}
|F_nh|\lesssim\delta^{-d}\big(\int_{|x|\le 1}|h|^\frac{p}{p-1}dx\big)^\frac{p-1}{p}
\end{equation}
and such that for any dyadic $\rho\ge 1$
\begin{equation}\label{o92}
\frac{1}{\rho}\Big(\av_{|x|\le \rho}\big|(\phi,\sigma)-\av_{|x|\le \rho}(\phi,\sigma)\big|^2 dx\Big)^\frac{1}{2}
\lesssim\delta+\sup_{R\ge \rho\;\text{dyadic}}\max_{n=1,\cdots,N}|F_{n,R}\nabla(\phi,\sigma)|.
\end{equation}
By dyadic iteration, it is enough to show for any dyadic $\rho\geq 1$
\begin{align}
&\frac{1}{\rho}\Big(\dashint_{|x|\le \rho}\big|(\phi,\sigma)-\av_{|x|\le \rho}(\phi,\sigma)\big|^2 dx\Big)^\frac{1}{2}\nonumber
\\ \nonumber
&\lesssim \max_{n=1,\cdots,N}|F_{n,\rho}\nabla(\phi,\sigma)|
\\&~~~~\label{o43}
+\delta\Big(\frac{1}{2\rho}\Big(\dashint_{|x|\le 2\rho}\big|(\phi,\sigma)-\av_{|x|\le 2\rho}(\phi,\sigma)\big|^2 dx\Big)^\frac{1}{2}+1\Big).
\end{align}
Indeed, abbreviating 
$D_m:=\frac{1}{2^{m}}\Big(\dashint_{|x|\le 2^m}\big|(\phi,\sigma)-\dashint_{|x|\le 2^m}(\phi,\sigma)\big|^2dx\Big)^\frac{1}{2}$,
the estimate (\ref{o43}) may be rewritten as (using a slight readjustment of $\delta$)
\begin{eqnarray}\nonumber
D_m
&\le&C_0\max_{n=1,\cdots,N}|F_{n,2^m}\nabla(\phi,\sigma)|
+\delta\Big(D_{m+1}+1\Big),
\end{eqnarray}
which may be iterated to 
\begin{align*}
D_m \le& \frac{1}{1-\delta}C_0\max_{M=m,m+1,\cdots,m+m_0}\max_{n=1,\cdots,N}|F_{n,2^{M}}\nabla(\phi,\sigma)|
\\&
+\frac{\delta}{1-\delta}+\delta^{m_0+1} D_{m+m_0+1}.
\end{align*}
By our sublinearity assumption on the corrector (\ref{SublinearityCorrector}) (which may be rewritten as $\lim_{m_0\uparrow\infty} D_{m_0}=0$), this yields (\ref{o92}).
\medskip

We now turn to the argument for \eqref{o43}.
By Caccioppoli's estimate on (\ref{f4}) we have
\begin{equation}\nonumber
\Big(\int_{|x|\le\frac{3}{2}\rho}|\nabla\phi_i|^2dx\Big)^\frac{1}{2}\lesssim
\frac{1}{\rho}
\Big(\int_{|x|\le 2\rho}\big(\phi_i-\av_{|x|\le 2\rho}\phi_i\big)^2 dx\Big)^\frac{1}{2}+\rho^{d/2},
\end{equation}
and thus in particular for the flux $q_i=a(\nabla\phi_i+e_i)$ 
\begin{equation}\nonumber
\Big(\int_{|x|\le\frac{3}{2}\rho}|q_i|^2dx\Big)^\frac{1}{2}\lesssim
\frac{1}{\rho}
\Big(\int_{|x|\le 2\rho}\big(\phi_i-\av_{|x|\le 2\rho}\phi_i\big)^2dx\Big)^\frac{1}{2}+\rho^{d/2}.
\end{equation} 
Caccioppoli's estimate on (\ref{f5}) gives
\begin{eqnarray*}
\lefteqn{\Big(\int_{|x|\le \rho}|\nabla\sigma_{ijk}|^2dx\Big)^\frac{1}{2}}
\nonumber\\
&\lesssim&
\frac{1}{\rho}
\Big(\int_{|x|\le \frac{3}{2}\rho}\big(\sigma_{ijk}-\av_{|x|\le \frac{3}{2}\rho}\sigma_{ijk}\big)^2 dx\Big)^\frac{1}{2}+
\Big(\int_{|x|\le \frac{3}{2}\rho}|q_i|^2dx\Big)^\frac{1}{2}\nonumber\\
&\le&
\frac{1}{\rho}
\Big(\int_{|x|\le 2\rho}\big(\sigma_{ijk}-\av_{|x|\le 2\rho}\sigma_{ijk}\big)^2dx\Big)^\frac{1}{2}+
\Big(\int_{|x|\le \frac{3}{2}\rho}|q_i|^2dx\Big)^\frac{1}{2}
\end{eqnarray*}
The last three estimates combine to
\begin{align}
&\nonumber
\Big(\int_{|x|\le \rho}|\nabla(\phi,\sigma)|^2 dx\Big)^\frac{1}{2}
\\&
\label{o89}
\lesssim\frac{1}{\rho}\Big(\int_{|x|\le 2\rho}\big|(\phi,\sigma)-\av_{|x|\le 2\rho}(\phi,\sigma)\big|^2 dx\Big)^\frac{1}{2}+\rho^{d/2}.
\end{align}
Hence for (\ref{o43}) is enough to show
\begin{eqnarray*}\nonumber
\frac{1}{\rho} \lefteqn{\Big(\dashint_{|x|\le \rho}\big|(\phi,\sigma)-\av_{|x|\le \rho}(\phi,\sigma)\big|^2 dx\Big)^\frac{1}{2}}\nonumber\\
&\le&\max_{n=1,\cdots,N}|F_{n,\rho}\nabla(\phi,\sigma)|
+\delta \Big(\dashint_{|x|\le \rho}|\nabla(\phi,\sigma)|^2dx\Big)^\frac{1}{2}.
\end{eqnarray*}
This statement is not just true for $(\phi,\sigma)-\av_{|x|\le \rho}(\phi,\sigma)$, but for any function $\zeta$ of vanishing spatial average on $\{|x|\leq \rho\}$: By rescaling, it is sufficient to show the estimate on the unit ball $\{|x|\leq 1\}$. It is more convenient to see it when the unit ball $\{|x|\le 1\}$ is replaced by the unit square $(0,1)^d$:
\begin{eqnarray}\label{o44}
\int_{(0,1)^d}\zeta^2dx
&\le&\max_{n=1,\cdots,N}|F_{n}\nabla\zeta|^2+\delta^2\int_{(0,1)^d}|\nabla\zeta|^2dx.
\end{eqnarray}
Indeed, dividing $(0,1)^d$ into $N=\delta^{-d}$ (suppose that $\delta^{-1}$ is an integer)
sub-cubes $\{Q_n\}_{n=1,\cdots,N}$ of side length $\delta$ and setting $F_n\nabla\zeta:=\av_{Q_n}\zeta dx$ (recall
that $\int_{(0,1)^d}\zeta dx=0$ so that $F_n$ is indeed a function of $\nabla\zeta$), 
(\ref{o44}) follows from using Poincar\'e's estimate on each $Q_n$
in form of $\int_{Q_n}\zeta^2 dx-|Q_n|(\av_{Q_n}\zeta dx)^2\lesssim\delta^2\int_{Q_n}|\nabla\zeta|^2dx$
and then summing up. We note that by Poincar\'e's estimate on $(0,1)^d$, the $F_n$ have the desired 
boundedness property (\ref{o93}), at first on gradient fields $\nabla\zeta$
\begin{equation}\nonumber
|F_n\nabla\zeta|\le\delta^{-d}\big(\int_{(0,1)^d}|\zeta|^\frac{p}{p-1} dx\big)^\frac{p-1}{p}
\lesssim \delta^{-d}\big(\int_{(0,1)^d}|\nabla\zeta|^\frac{p}{p-1} dx\big)^\frac{p-1}{p},
\end{equation}
and then on any vector field $h$ by extension \`a la Hahn-Banach.
\end{proof}

\section{Proof of Main Result}

\begin{proof}[Proof of Theorem \ref{main}]
~
\newline
{\bf Proof of Assertion i)}.


Consider the functionals $\{F_{n}\}_{n=1,\cdots,N}$ and their rescalings
$\{F_{n,R}\}_{n=1,\cdots,N;R\;\text{dyadic}}$ constructed in Lemma~\ref{SufficientConditionMeanValueProperty}. Let us abbreviate $F_{n,R}\nabla (\phi,\sigma)$ as $F_{n,R}$. We would like to apply concentration of measure to these functionals.

The main difficulty that we need to overcome is that our sensitivity estimate
(\ref{o15}) in Lemma \ref{SmallnessMalliavinDerivative} for the quantity $F_{n,r}$ is based on the assumption that the mean-value property (\ref{o14}) holds for $a$-harmonic functions down to scale $r$. By Lemma \ref{SufficientConditionMeanValueProperty} this assumption may be reduced to the smallness assumption (\ref{o45}) for our functionals $F_{m,R}$ on scales $R\geq r$, so that Lemma \ref{SmallnessMalliavinDerivative} becomes applicable under the assumption \eqref{o45}: 
Let $q$ be related to $\beta$ through (\ref{o12}) and let $p$ be related to $q$ through (\ref{o13}). By the smallness assumption on $\beta$ in our theorem (cf.\ \eqref{AssumptionBetaSmallness}), we deduce that (\ref{SmallnessBetaThroughp}) holds. By scaling, our functionals $F_{n,r}$ satisfy the estimate (\ref{o90}) up to a universal constant factor. Furthermore, by ergodicity the property (\ref{SublinearityCorrector}) holds for $\langle\cdot\rangle$-almost every coefficient field $a$ (regarding $\sigma$, this result has been shown in \cite[Lemma 1]{GloriaNeukammOtto}; for $\phi$, it is classical but may also be found in \cite{GloriaNeukammOtto}).
Thus, the estimate \eqref{o15} holds for $F_{n,r}$  under the assumption \eqref{o45}, i.e. there exists a constant $C_0$ only depending on $d$, $\lambda$, and $\beta$, such that for any $n=1,\cdots,N$ and any radius $r$ the implication
\begin{equation}\label{o55}
\sup_{m,R\ge r\text{ dyadic}}F_{m,R}\le\frac{1}{C_0}\quad\Longrightarrow\quad
\|\frac{\partial F_{n,r}}{\partial a}\|_q^2\lesssim\frac{1}{r^\beta}
\end{equation}
holds for $\langle\cdot\rangle$-a.e. coefficient field $a$.

\medskip

To apply concentration of measure in the form of Proposition \ref{ConcentrationOfMeasure} to some functional $F$, we however need an unconditional bound on the Malliavin derivative (cf.\ (\ref{o2}) respectively (\ref{o46})).

Therefore we first introduce a new random variable whose derivative vanishes whenever the smallness condition in (\ref{o55}) is violated: Consider the auxiliary random variable
\begin{equation}\label{o58}
\bar F_r:=\min\{\sup_{m,R\ge r}|F_{m,R}|,\frac{1}{C_0}\},
\end{equation}
where the $\sup$ runs over all dyadic radii $R=2^k r$, $k\in \mathbb{N}_0$.
By the usual differentiation rules applied to the Fr\'echet derivative $\frac{\partial}{\partial a}$
in the norm $\|\cdot\|_q$, we obtain
\begin{equation}\nonumber
\|\frac{\partial\bar F_r}{\partial a}\|_q\le I(\sup_{m,R\ge r}F_{m,R}\le\frac{1}{C_0})\sup_{m,R\ge r}
\|\frac{\partial F_{m,R}}{\partial a}\|_q
\end{equation}
and thus by (\ref{o55})
\begin{equation}\nonumber
\|\frac{\partial\bar F_r}{\partial a}\|_q^2\lesssim \frac{1}{r^\beta}.
\end{equation}
By Lemma \ref{SimplifiedConditionMalliavinDerivative}, we may apply concentration of measure in form of (\ref{o56}) to the random variable $c r^{\beta/2} \bar F_r$ (where $c$ is some small universal constant). This yields
\begin{equation}\label{o57}
\langle I(|\bar F_r-\langle\bar F_r\rangle|\ge M)\rangle\lesssim\exp(-\frac{1}{C}r^\beta M^2)
\quad\mbox{for all}\; M\geq 0,
\end{equation}
so that it remains to control the expectation $\langle\bar F_r\rangle$.

\medskip

Because of (\ref{f2}) and the definition of $F_{m,R}$, it follows from qualitative ergodicity
of $\langle\cdot\rangle$ and Birkhoff's ergodic theorem that $\lim_{R\uparrow\infty}F_{m,R}=0$
almost surely, so that by dominated convergence $\lim_{r\uparrow\infty}\langle\bar F_r\rangle=0$.
Hence there exists a {\it finite} radius $r_0$ which is minimal with the property 
\begin{equation}\label{o61}
\langle \bar F_r\rangle\le \frac{1}{4C_0}\quad\mbox{for all}\;r\ge r_0.
\end{equation}
Hence using (\ref{o57}) for $M=\frac{1}{4C_0}$ we get in view of the definition (\ref{o58}) of $\bar F_r$
\begin{equation}\label{o59}
\langle I(\sup_{m,R\ge r}|F_{m,R}|\ge\frac{1}{2C_0})\rangle
\lesssim\exp(-\frac{1}{C}r^\beta)\quad\mbox{for all}\;r\ge r_0.
\end{equation}

\medskip

On the basis of (\ref{o59}), we now get a {\it quantitative} estimate on $r_0$.
To this purpose we now consider the auxiliary variable
\begin{equation}\label{o60}
\bar F_{n,r}:=\eta(\sup_{m,R\ge r}F_{m,R})F_{n,r},
\end{equation}
where again the $\sup$ runs over all dyadic radii $R=2^k r$, $k\in \mathbb{N}_0$, and where the cut-off function $\eta=\eta(F)$ is given by
\begin{equation}\label{o51}
\eta(F)=\max\{\min\{2C_0(\frac{1}{C_0}-F),1\},0\}.
\end{equation}
The advantage of the auxiliary variable (\ref{o60}) over (\ref{o58}) is that we control
its expectation: Since the stationary $\nabla(\phi,\sigma)$ has vanishing expectation, cf.\ (\ref{f2}), 
and by the linearity of $F_{n,r}$ in $\nabla(\phi,\sigma)$ we have
$\langle F_{n,r}\rangle=0$ and thus $\langle\bar F_{n,r}\rangle=\langle(\eta-1) F_{n,r}\rangle$
so that by construction of $\eta$
\begin{equation}\nonumber
|\langle\bar F_{n,r}\rangle|\le\langle I(\sup_{m,R\ge r}|F_{m,R}|\ge\frac{1}{2C_0})|F_{n,r}|\rangle.
\end{equation}
Since the stationary $\nabla(\phi,\sigma)$ has bounded second moments, cf.\ (\ref{f2}), and by
the boundedness property of $F_{n,r}$ in $\nabla(\phi,\sigma)$ we obtain from the Cauchy-Schwarz inequality
\begin{equation}\nonumber
\langle\bar F_{n,r}\rangle^2\lesssim\langle I(\sup_{m,R\ge r}|F_{m,R}|\ge\frac{1}{2C_0})\rangle,
\end{equation}
which in view of (\ref{o59}) improves to
\begin{equation}\label{f54}
|\langle\bar F_{n,r}\rangle|\lesssim\exp(-\frac{1}{C}r^\beta)\quad\mbox{for any}\;r\ge r_0.
\end{equation}
By differentiation rules for the Fr\'echet derivative $\frac{\partial}{\partial a}$
in the norm $\|\cdot\|_q$ we obtain for the auxiliary
random variable $\bar F_{n,r}$
\begin{eqnarray*}
\|\frac{\partial \bar F_{n,r}}{\partial a}\|_q&\le&
I(\sup_{m,R\ge r}|F_{m,R}|\le \frac{1}{C_0})\big(2C_0|F_{n,r}|
\sup_{m,R\ge r}\|\frac{\partial F_{m,R}}{\partial a}\|_q+
\|\frac{\partial F_{n,r}}{\partial a}\|_q\big)\\
&\le&3I(\sup_{m,R\ge r}|F_{m,R}|\le \frac{1}{C_0})
\sup_{m,R\ge r}\|\frac{\partial F_{m,R}}{\partial a}\|_q,
\end{eqnarray*}
and thus by (\ref{o55})
\begin{equation}\nonumber
\|\frac{\partial \bar F_{n,r}}{\partial a}\|_q^2\lesssim\frac{1}{r^\beta},
\end{equation}
and hence by concentration of measure in form of (\ref{o56}) (applied to $cr^{\beta/2} \bar F_{n,r}$ by means of Lemma \ref{SimplifiedConditionMalliavinDerivative}, $c$ being a small universal constant)
\begin{equation}\nonumber
\langle I(|\bar F_{n,r}-\langle \bar F_{n,r}\rangle|\ge M)\rangle
\lesssim\exp(-\frac{1}{C}r^\beta M^2)\quad\mbox{for all}\;M\geq 0.
\end{equation}
Together with (\ref{f54}) this yields
\begin{equation}\nonumber
\langle I(|\bar F_{n,r}|\ge M)\rangle
\lesssim\exp(-\frac{1}{C}r^\beta M^2)\quad\mbox{for}\;M\gg\exp(-\frac{1}{C}r^\beta)\;\mbox{and}\;r\ge r_0.
\end{equation}
By definition (\ref{o60}) we have $\langle I(|F_{n,r}|\ge M)\rangle
%
%
\le\langle I(\sup_{m,R\ge r}|F_{m,R}|\ge\frac{1}{2C_0})\rangle+\langle I(|\bar F_{n,r}|\ge M)\rangle$
so that by (\ref{o59}) the above upgrades to
\begin{equation}\nonumber
\langle I(|F_{n,r}|\ge M)\rangle
\lesssim\exp(-\frac{1}{C}r^\beta M^2)
\quad\mbox{for}\;1\ge\;M\gg\exp(-\frac{1}{C}r^\beta)\;\mbox{and}\;r\ge r_0.
\end{equation}
Since $r^\beta\exp(-\frac{1}{C}r^\beta)\lesssim 1$ for all $r$, the above
holds without the lower restriction on $M$:
\begin{equation}\label{o70}
\langle I(|F_{n,r}|\ge M)\rangle
\lesssim\exp(-\frac{1}{C}r^\beta M^2)
\quad\mbox{for}\;M\le1\;\mbox{and}\;r\ge r_0.
\end{equation}
Using this estimate with $r$ replaced by $R$ and summing over the finite index
set $n=1,\cdots,N$ and all dyadic $R\ge r$ we obtain
\begin{eqnarray}\label{o63}
\langle I(\sup_{m,R\ge r}|F_{m,R}|\ge M)\rangle
&\lesssim&\exp(-\frac{1}{C}r^\beta M^2)\nonumber\\
&&\mbox{for}\;M\le1\;\mbox{and}\;r\ge r_0
\end{eqnarray}
and thus in particular for the auxiliary random variable (\ref{o58})
\begin{equation}\nonumber
\langle I(\bar F_r\ge M)\rangle
\lesssim\exp(-\frac{1}{C}r^\beta M^2)
\quad\mbox{for all}\;M\geq 0\;\mbox{and}\;r\ge r_0,
\end{equation}
where the upper bound on $M$ is immaterial since $\bar F_r\le\frac{1}{C_0}\le 1$.
Using $\langle \bar F_r\rangle=\int_0^\infty\langle I(\bar F_r\ge M)\rangle dM$, this yields
the following quantification of $\lim_{r\uparrow\infty}\langle\bar F_r\rangle=0$:
\begin{equation}\nonumber
\langle \bar F_r\rangle\lesssim\int_0^\infty\exp(-r^\beta M^2)dM
\lesssim r^{-\frac{\beta}{2}}
\quad\text{ for all }r\geq r_0.
\end{equation}
Since $r_0$ was minimal in (\ref{o61}) and since $\langle\bar F_r\rangle$ depends continuously on $r$, this yields the desired
\begin{equation}\label{o71}
r_0\lesssim 1.
\end{equation}

\medskip

It remains to argue why (\ref{o70}), which together with (\ref{o71}) may be
rephrased as
\begin{equation}\nonumber
\langle I(|F_{n,r}|\ge M)\rangle
\lesssim\exp(-\frac{1}{C}r^\beta M^2)
\quad\mbox{for}\;M\le1\;\mbox{and}\;r\gg 1,
\end{equation}
yields (\ref{o72}). It just suffices to include the given functional $F$ from (\ref{o73}) into the
list of finitely many functionals $F_1,\cdots,F_N$, say, as the last functional $F_N=F$,
and then to specify the above to $n=N$. We note that for $q$ related to $\beta$ through (\ref{o12}) and $p$ related to $q$ through (\ref{o13}) one has $\frac{p}{p-1}=\frac{2d}{d+\beta}$, i.e. (\ref{o73}) entails (\ref{o90}). Note that by adjusting the constants, (\ref{o72}) is
trivial for $r\lesssim 1$, so that we obtain (\ref{o72}) over the whole range $r\ge 0$.

\medskip

\ignore{
We now consider the following proxy of the $r_*$ defined through (\ref{o62}):
Let the random dyadic radius $r_*\ge r_0$ be minimal with the property
\begin{equation}\nonumber
(F_{n,r})^2\le(\frac{r_*}{r})^\beta\log\log(\frac{r}{r_*}+1)\quad
\mbox{for all dyadic}\;r\ge r_*\;\mbox{and}\;n=1,\cdots,N.
\end{equation}
We claim that
\begin{equation}\label{o66}
\langle\exp(-\frac{1}{C}r_*^\beta)\rangle\lesssim 1.
\end{equation}
To this purpose, we rewrite (\ref{o63}) in terms of $\hat M=r^\frac{\beta}{2}M$:
\begin{eqnarray}\label{o64}
\langle I(r^\frac{\beta}{2}\sup_{n}|F_{n,r}|\ge \hat M)\rangle
&\lesssim&\exp(-\frac{1}{C}\hat M^2)
\end{eqnarray}
for all $r\ge r_0$ and $r^\frac{\beta}{2}\ge\;\hat M\ge r^\frac{\beta}{2}\exp(-\frac{1}{C}r^\beta)$,
that is, for all 
\begin{equation}\label{o65}
r^\frac{\beta}{2}\gtrsim\;\hat M\gtrsim 1.
\end{equation}
Indeed, fix a dyadic radius $r_1\ge r_0$. Suppose that we have
\begin{equation}\label{o67}
r_*\ge r_1,
\end{equation}
which by definition of $r_*$ means that there exists
a dyadic radius $r\ge r_1$ and an $n=1,\cdots,N$ with
\begin{equation}\nonumber
(F_{n,r})^2\ge(\frac{r_1}{r})^\beta\log\log(\frac{r}{r_1}+1),
\end{equation}
that is, writing $r=2^mr_1$
\begin{equation}\label{o68}
\exists m=0,1,\cdots\quad (2^mr_1)^\beta\max_{n}(F_{n,2^mr_1})^2\gtrsim r_1^\beta\log(m+e).
\end{equation}
We now apply (\ref{o64}) for $r=2^mr_1$ and $\hat M^2=r_1^\beta\log(m+e)$, noting that
(\ref{o65}) is satisfied, and obtain
\begin{equation}\nonumber
\langle I((2^mr_1)^\beta\max_{n}(F_{n,2^mr_1})^2\gtrsim r_1^\beta\log(m+1))\rangle
\lesssim \exp\big(-\frac{1}{C}r_1^\beta\log(m+1)\big).
\end{equation}
Since (\ref{o67}) implies (\ref{o68}), this implies (\ref{o66}) in form of 
\begin{equation}\nonumber
\langle I(r_*\ge r_1)\rangle
\lesssim \sum_{m=0}^\infty\exp\big(-\frac{1}{C}r_1^\beta\log(m+e)\big)
\lesssim \exp(-\frac{1}{C}r_1^\beta).
\end{equation}

\medskip
}

{\bf Proof of Assertion ii)}.

The arguments in this section require $\beta<2$, which in view
of our assumption $\beta\ll 1$ is no restriction. Let $r_*$ denote the minimal dyadic radius with the property (\ref{o62})
-- we know but don't have to use that it is finite by quantitative ergodicity. In order to establish
(\ref{o17}), it is enough to show for a given dyadic $r_0\ge 1$ that 
\begin{equation}\label{o77}
\langle I(r_*> r_0)\rangle\lesssim\exp(-\frac{1}{C}r_0^\beta).
\end{equation}
It will be convenient to replace balls by cubes. Moreover, all radii or rather side length are dyadic. 
By definition of $r_*$ as the smallest radius with (\ref{o62}), the event $r_*> r_0$ means that
there exists a radius $R\ge r_0$ with
\begin{align}
\label{o80}
&\frac{1}{R^2}\av_{(-R,R)^d}|(\phi,\sigma)-\av_{(-R,R)^d}(\phi,\sigma)|^2dx>(\frac{r_0}{R})^\beta f(\frac{R}{r_0}),
\end{align}
where
\begin{align*}
f(z):=\log(e+\log z).
\end{align*}
In the sequel, the intermediate (dyadic) radius $r_1\in[r_0,R]$ with
\begin{equation}\label{o83}
r_1\sim r_0^\frac{\beta}{2}R^{1-\frac{\beta}{2}}f^\frac{1}{2}(\frac{R}{r_0})\quad\mbox{so that}\quad
(\frac{r_1}{R})^2\sim(\frac{r_0}{R})^\beta f(\frac{R}{r_0})
\end{equation}
will play a role. Note that we use here $\beta>0$ and that $f(z)$ grows sub-algebraically.
For the l.\ h.\ s.\ of (\ref{o80}) we note
\begin{align}
\lefteqn{\av_{(-R,R)^d}|(\phi,\sigma)-\av_{(-R,R)^d}(\phi,\sigma)|^2dx}
\nonumber
\\
\label{o81}
&=\av_{(-R,R)^d}|(\phi,\sigma)-(\phi,\sigma)_{r_1}|^2dx
+\sum_{\stackit{r\in[2r_1,R]}{r\;\text{dyadic}}}\av_{(-R,R)^d}|(\phi,\sigma)_{\frac{r}{2}}-(\phi,\sigma)_{r}|^2dx,
\end{align}
where $(\phi,\sigma)_r$ denotes the $L^2((-R,R)^d)$-orthogonal projection of $(\phi,\sigma)$ onto
the space of functions that are piecewise constant on the $(\frac{R}{r})^d$ dyadic sub-cubes $Q$ of ``level $r$''
(that is, of side length $2r$)
of the cube $(-R,R)^d$. In other words, on such a sub-cube $Q$, $(\phi,\sigma)_r=\av_{Q}(\phi,\sigma)dx$.
With this language, we may rewrite the first r.\ h.\ s.\ term of (\ref{o81}) as
\begin{equation}\nonumber
\av_{(-R,R)^d}|(\phi,\sigma)-(\phi,\sigma)_{r_1}|^2dx
=(\frac{r_1}{R})^d\sum_{Q\;\text{level}\;r_1}\av_{Q}|(\phi,\sigma)-\av_{Q}(\phi,\sigma)|^2dx,
\end{equation}
so that by Poincar\'e's estimate on each of the cubes $Q$ we obtain
\begin{equation}\nonumber
\av_{(-R,R)^d}|(\phi,\sigma)-(\phi,\sigma)_{r_1}|^2dx
\lesssim r_1^2\av_{(-R,R)^d}|\nabla(\phi,\sigma)|^2dx,
\end{equation}
and then by Caccioppoli's estimate based on (\ref{f4}) \& (\ref{f5}), cf.\ (\ref{o89}),
\begin{align*}
&\av_{(-R,R)^d}|(\phi,\sigma)-(\phi,\sigma)_{r_1}|^2dx
\\&
\lesssim (\frac{r_1}{R})^2\av_{(-2R,2R)^d}|(\phi,\sigma)-\av_{(-2R,2R)^d}(\phi,\sigma)|^2dx+r_1^2.
\end{align*}
As a consequence of Lemma \ref{SufficientConditionMeanValueProperty}, there exist $N\sim 1$ linear functionals $\{F_{n}\}_{n=1,\cdots,N}$ whose rescaled versions $F_{n,r}$ satisfy the boundedness property (\ref{o73}) such that for any $r\ge 2R$ we have the implication
\begin{align*}
&\sup_{r\ge 2R~\text{dyadic}}\max_{n=1,\cdots,N}(F_{n,r}\nabla(\phi,\sigma))^2\ll 1
\\&
\Longrightarrow
\frac{1}{R^2}\av_{(-2R,2R)^d}|(\phi,\sigma)-\av_{(-2R,2R)^d}(\phi,\sigma)|^2dx\lesssim 1.
\end{align*}
From the two last statements we gather
\begin{align*}
&\sup_{r\ge 2R~\text{dyadic}}\max_{n=1,\cdots,N}(F_{n,r}\nabla(\phi,\sigma))^2\ll 1
\\&
\Longrightarrow
\frac{1}{R^2}\av_{(-R,R)^d}|(\phi,\sigma)-(\phi,\sigma)_{r_1}|^2dx\lesssim(\frac{r_1}{R})^2.
\end{align*}
In view of (\ref{o83}) this can be rewritten as
\begin{eqnarray}\label{o84}
\lefteqn{\forall\;r\ge 2R~\text{dyadic},\;n=1,\cdots,N\quad (F_{n,r}\nabla(\phi,\sigma))^2\ll 1}\nonumber\\
&\Longrightarrow&
\frac{1}{R^2}\av_{(-R,R)^d}|(\phi,\sigma)-(\phi,\sigma)_{r_1}|^2dx\le\frac{1}{2}(\frac{r_0}{R})^\beta f(\frac{R}{r_0}),
\end{eqnarray}
provided that we adjust the definition (\ref{o83}) of $r_1$ appropriately
(to obtain the estimate $\le\frac{1}{2}\cdot$ in (\ref{o84}) in place of just $\lesssim$).
We now turn to the second r.\ h.\ s.\ term in (\ref{o81}), which in view of the definition
of $(\phi,\sigma)_r$ we may estimate as follows
\begin{eqnarray*}
\lefteqn{\sum_{\stackit{r\in[2r_1,R]}{r\;\text{dyadic}}}\av_{(-R,R)^d}|(\phi,\sigma)_{\frac{r}{2}}-(\phi,\sigma)_{r}|^2dx}\\
&\le&\sum_{\stackit{r\in[2r_1,R]}{r\;\text{dyadic}}}\max_{Q\;\text{level}\;r}\max_{Q'\subset Q\;\text{level}\;\frac{r}{2}}
\big|\av_{Q'}(\phi,\sigma)-\av_{Q}(\phi,\sigma)\big|^2.
\end{eqnarray*}
Hence if for any of the $(\frac{R}{r})^d$ dyadic sub-cubes $Q$ of $(-R,R)^d$ of level $r$ 
we introduce the $N=2^d$ linear functionals $F_{Q,n}$ as an extension of
\begin{equation}\nonumber
F_{Q,n}\nabla\zeta:=\frac{1}{r}(\av_{Q'_n}\zeta dx-\av_{Q}\zeta dx),
\end{equation}
where $\{Q'_n\}_{n=1,\cdots,2^d}$ is an enumeration of the sub-cubes of level $\frac{r}{2}$ of $Q$, 
and which satisfy the desired boundedness property (\ref{o73}) restricted to gradient fields
(which is no issue because of Hahn-Banach extension) and translated (which will be no issue because
of stationarity), that is,
\begin{equation}\label{o86}
|F_{Q,n}\nabla\zeta|\lesssim\big(\av_{Q}|\nabla\zeta|^\frac{2d}{d+\beta}dx\big)^\frac{d+\beta}{2d},
\end{equation}
we have
\begin{eqnarray*}
\lefteqn{\frac{1}{R^2}\sum_{\stackit{r\in[2r_1,R]}{r\;\text{dyadic}}}\av_{(-R,R)^d}|(\phi,\sigma)_{\frac{r}{2}}-(\phi,\sigma)_{r}|^2dx}\nonumber\\
&\le&\sum_{\stackit{r\in[2r_1,R]}{r\;\text{dyadic}}}(\frac{r}{R})^2\max_{Q\;\text{level}\;r}\max_{n=1,\cdots,2^d}(F_{Q,n}\nabla(\phi,\sigma))^2.
\end{eqnarray*}
From this we learn, since for the auxiliary function $g(z):=\log^{-2}(z+e)$, 
the dyadic sum $\sum_{r\in[2r_1,R]}g(\frac{R}{r})$ is universally bounded,
\begin{eqnarray*}\nonumber
\lefteqn{\forall\;r\in[2r_1,R]\text{ dyadic},\;Q\;\mbox{level}\;r,\;n=1,\cdots,2^d}\nonumber\\
&\quad&
(F_{Q,n}\nabla(\phi,\sigma))^2\ll(\frac{R}{r})^{2}g(\frac{R}{r})(\frac{r_0}{R})^\beta f(\frac{R}{r_0})\nonumber\\
&\Longrightarrow&\frac{1}{R^2}\sum_{\stackit{r\in[2r_1,R]}{r\;\text{dyadic}}}\av_{(-R,R)^d}|(\phi,\sigma)_{\frac{r}{2}}-(\phi,\sigma)_{r}|^2dx
\le\frac{1}{2}(\frac{r_0}{R})^\beta f(\frac{R}{r_0}).
\end{eqnarray*}
In view of (\ref{o81}), the combination of this with (\ref{o84}) yields
\begin{eqnarray}\label{o85}
\lefteqn{\forall\;r\in[2r_1,R]\text{ dyadic},\;Q\;\mbox{level}\;r,\;n=1,\cdots,2^d}\nonumber\\
&\quad&
(F_{Q,n}\nabla(\phi,\sigma))^2\ll(\frac{R}{r})^{2}g(\frac{R}{r})(\frac{r_0}{R})^\beta f(\frac{R}{r_0})\nonumber\\
&\mbox{and}&\forall\;r\ge 2R~\text{dyadic},\;n=1,\cdots,N\quad (F_{n,r}\nabla(\phi,\sigma))^2\ll 1\nonumber\\
&\Longrightarrow&\frac{1}{R^2}\av_{(-R,R)^d}|(\phi,\sigma)-\av_{(-R,R)^d}(\phi,\sigma)|^2dx
\le(\frac{r_0}{R})^\beta f(\frac{R}{r_0}).
\end{eqnarray}

\medskip

Equipped with this deterministic argument, we now may proceed to the stochastic part: In 
the event of $r_*>r_0$, there exists a dyadic $R\ge r_0$ such that (\ref{o80}) holds, 
so that we learn from (\ref{o85}) that there exists 
\begin{itemize}
\item a (dyadic) $r\in[2r_1,R]$, a sub-cube $Q$ of $(-R,R)^d$ of level $r$, and an index $n=1,\cdots,2^d$ such
that $(F_{Q,n}\nabla(\phi,\sigma))^2\gtrsim(\frac{R}{r})^2g(\frac{R}{r})(\frac{r_0}{R})^\beta f(\frac{R}{r_0})$.
In view of the boundedness condition (\ref{o86}) and stationarity, we may apply (\ref{o72}) with $F$ replaced by 
$F_{Q,n}$ and $M^2$ replaced by $(\frac{R}{r})^2g(\frac{R}{r})(\frac{r_0}{R})^\beta f(\frac{R}{r_0})$. This $M$
is admissible in the sense of $M\lesssim 1$ because by (\ref{o83}) we have 
$(\frac{R}{r})^2g(\frac{R}{r})(\frac{r_0}{R})^\beta f(\frac{R}{r_0})
\le(\frac{R}{r_1})^2(\frac{r_0}{R})^\beta f(\frac{R}{r_0})\sim 1$.
Hence the probability of each single of this events is estimated as follows
\begin{eqnarray*}\nonumber
\lefteqn{\big\langle I\big((F_{Q,n}\nabla(\phi,\sigma))^2
\ge\frac{1}{C}(\frac{R}{r})^2g(\frac{R}{r})(\frac{r_0}{R})^\beta f(\frac{R}{r_0})\big)\big\rangle}\\
&\lesssim&\exp\big(-\frac{1}{C}(\frac{R}{r})^{2-\beta}g(\frac{R}{r}) f(\frac{R}{r_0}) r_0^\beta \big).
\end{eqnarray*}
Since $g(z)$ decays sub-algebraically in $z$ and since $\beta<2$, this yields the simpler form
\begin{eqnarray*}\nonumber
\lefteqn{\big\langle I\big((F_{Q,n}\nabla(\phi,\sigma))^2
\ge\frac{1}{C}(\frac{R}{r})^2g(\frac{R}{r})(\frac{r_0}{R})^\beta f(\frac{R}{r_0})\big)\big\rangle}\\
&\lesssim&\exp\big(-\frac{1}{C}(\frac{R}{r})^{1-\frac{\beta}{2}} f(\frac{R}{r_0}) r_0^\beta \big).
\end{eqnarray*}
\item {\it or} a (dyadic) $r\ge 2R$ and an index $n=1,\cdots,N$ for which the estimate $(F_{n,r}\nabla(\phi,\sigma))^2\gtrsim 1$ holds.
By the boundedness property of $F_{n,r}$, each single of these events is estimated as
\begin{equation}\nonumber
\big\langle I\big((F_{n,r}\nabla(\phi,\sigma))^2
\ge\frac{1}{C}\big)\big\rangle
\lesssim\exp(-\frac{1}{C}r^\beta).
\end{equation}
\end{itemize}
Taking the number $(\frac{R}{r})^d$ of sub-cubes $Q$ into account and recalling $N\lesssim 1$, this implies
\begin{align}
&\nonumber
\langle I(r_*>r_0)\rangle
\\
&\lesssim
\sum_{\stackit{R\ge r_0}{R\;\text{dyadic}}}
\Big(\sum_{\stackit{r\in[2r_0,R]}{r\;\text{dyadic}}}
(\frac{R}{r})^d\exp(-\frac{1}{C}(\frac{R}{r})^{1-\frac{\beta}{2}}f(\frac{R}{r_0})r_0^\beta)
+\sum_{\stackit{r\ge 2R}{r\;\text{dyadic}}}\exp(-\frac{1}{C}r^\beta)\Big).
\label{o88}
\end{align}
Again, since $1-\frac{\beta}{2}>0$, we have the calculus estimate
\begin{align*}\nonumber
&\sum_{\stackit{r\in[2r_0,R]}{r\;\text{dyadic}}}(\frac{R}{r})^d\exp\big(-A(\frac{R}{r})^{1-\frac{\beta}{2}}\big)
\\
&\lesssim
\exp(-A)\sum_{\stackit{r\in[2r_0,R]}{r\;\text{dyadic}}}(\frac{R}{r})^d\exp\big(-\frac{1}{C}A\log(\frac{R}{r})\big)\nonumber\\
&\lesssim
\exp(-A)\quad\mbox{for}\;A\gg 1.
\end{align*}
Applying this to the first sum over $r$ in (\ref{o88}) and $A=\frac{1}{C}f(\frac{R}{r_0})r_0^\beta$,
which satisfies $A\gg 1$ for $r_0\gg 1$, and using the estimate $\sum_{r\geq 2R; r\text{ dyadic}} \exp(-\frac{1}{C}r^\beta) \lesssim \exp(-\frac{1}{C} R^\beta)$ (which holds provided that $R\geq r_0\geq 1$) for the second sum over $r$, we obtain
\begin{eqnarray}
\langle I(r_*>r_0)\rangle
&\lesssim& \sum_{\stackit{R\ge r_0}{R\;\text{dyadic}}}\Big(\exp(-\frac{1}{C}f(\frac{R}{r_0})r_0^\beta)
+
\exp(-\frac{1}{C}R^\beta)\Big)
\quad\text{for }r_0\gg 1.\nonumber
\end{eqnarray}
Thanks to $\beta>0$, we have
$\exp(-\frac{1}{C}R^\beta)
\lesssim\exp(-\frac{1}{C}f(\frac{R}{r_0})r_0^\beta)$,
so that the second summand is dominated by the first one:
\begin{eqnarray}
\langle I(r_*>r_0)\rangle
&\lesssim& \sum_{\stackit{R\ge r_0}{R\;\text{dyadic}}}\exp(-\frac{1}{C}f(\frac{R}{r_0})r_0^\beta)
=\sum_{m=0}^\infty\exp(-\frac{1}{C}f(2^m)r_0^\beta)
.\nonumber
\end{eqnarray}
Now we see the reason for the choice of $f(z)=\log(e+\log z)$ for which $f(2^m)\ge\frac{1}{C}(1+\log(m+1))$
and thus 
\begin{equation}\nonumber
\sum_{m=0}^\infty\exp(-A f(2^m))\le\sum_{m=0}^\infty\exp(-\frac{A}{C}(1+\log(m+1)))\lesssim \exp(-\frac{A}{C})
\quad\mbox{for}\;A\gg 1.
\end{equation}
With $\frac{1}{C}r_0^\beta$ playing the role of $A$ this yields (\ref{o77}). Note that the condition $r_0\gg 1$ is immaterial after adjusting the constants, as the l.\ h.\ s.\ of (\ref{o77}) is bounded by $1$.
\end{proof}

\ignore{
The main deterministic ingredient is the following: For any dyadic $r\ge r_0$ there exist
\begin{equation}\label{o76}
N\lesssim (\frac{r}{r_0})^d
\end{equation}
linear functionals $F_{r,n}=F_{r,n}h$ on vector fields $h=h(x)$ with the boundedness property
\begin{equation}\label{o75}
|F_{r,n}h|\le\big(\av_{(-r,r)^d}|h|^\frac{2d}{d+\beta}dx\big)^\frac{d+\beta}{2d}
\end{equation}
such that for any realization of the coefficient field
\begin{eqnarray}
\lefteqn{\sup_{r\ge r_0,n=1,\cdots,N}F_{r,n}^2\nabla(\phi,\sigma)\ll(\frac{r_0}{r})^\beta\log(\frac{r}{r_0}+1)}\nonumber\\
&\Longrightarrow&\sup_{r_1\ge r_0}\frac{1}{r_1^2}\av_{(-r_1,r_1)^d}|(\phi,\sigma)-\av(\phi,\sigma)|^2dx
\le(\frac{r_0}{r_1})^\beta\log(\frac{r_1}{r_0}+1),\label{o74}
\end{eqnarray}
where here and in the sequel, all radii are dyadic. Let us first argue how to conclude with (\ref{o74}) at hand.
By definition of $r_*$ as the smallest radius with (\ref{o62}), the event $r_*\ge r_0$ means that
there exists a radius $r_1\ge r_0$ with 
\begin{equation}\nonumber
\frac{1}{r_1^2}\av_{(-r_1,r_1)^d}|(\phi,\sigma)-\av(\phi,\sigma)|^2dx\ge(\frac{r_0}{r_1})^\beta\log(\frac{r_1}{r_0}+1).
\end{equation}
According to (\ref{o74}) this implies that there is a radius $r\ge r_0$ and an index $n=1,\cdots,N$ such that
\begin{equation}\nonumber
F_{r,n}^2\nabla(\phi,\sigma)\ge\frac{1}{C}(\frac{r_0}{r})^\beta\log(\frac{r}{r_0}+1).
\end{equation}
In terms of probabilities, this implies
\begin{equation}\nonumber
\langle I(r_*> r_0)\rangle\le\sum_{r\ge r_0}\sum_{n=1}^N
\big\langle I\big(F_{r,n}^2\nabla(\phi,\sigma)\ge\frac{1}{C}(\frac{r_0}{r})^\beta\log(\frac{r}{r_0}+1)\big)\big\rangle.
\end{equation}
In view of (\ref{o75}) we may apply (\ref{o72}) to $F=F_{n,r}$ and $M\sim (\frac{r_0}{r})^\beta\log(\frac{r}{r_0}+1)$,
yielding
\begin{eqnarray*}\nonumber
\langle I(r_*> r_0)\rangle
&\lesssim&\sum_{r\ge r_0}N\exp(-\frac{1}{C}r_0^\beta\log(\frac{r}{r_0}+1))\nonumber\\
&\stackrel{(\ref{o76})}{\lesssim}&\sum_{r\ge r_0}(\frac{r}{r_0})^d\exp(-\frac{1}{C}r_0^\beta\log(\frac{r}{r_0}+1)).
\end{eqnarray*}
Giving up on the constant $C$ in the exponential, in order to obtain (\ref{o77}) from here,
it is enough to argue that for $r_0\gg 1$ we have 
$\sum_{r\ge r_0}(\frac{r}{r_0})^d\exp(-\frac{1}{C}r_0^\beta\log(\frac{r}{r_0}+1))\lesssim 1$.
Indeed, we may assume that $r_0$ is so large that $\exp(-\frac{1}{C}r_0^\beta\log z)\le z^{-2d}$,
so that $\sum_{r\ge r_0}(\frac{r}{r_0}+1)^d\exp(-\frac{1}{C}r_0^\beta\log(\frac{r}{r_0}+1))\lesssim
\sum_{r\ge r_0}(\frac{r}{r_0}+1)^{-d}$, a geometrically converging series.

\medskip

We now turn to the deterministic statement (\ref{o74}). Let $r_1\ge r_0$ be given. 
For a given $r\ge r_0$ we now specify the 
linear functionals $F_{r,n}=F_{r,n}h$ which are some spatial averages of $h$ on $(-r,r)^d$ 
in the sense of (\ref{o75}). They come in two families: The first family is obtained as follow
}

\bibliographystyle{plain}
\bibliography{stochastic_homogenization}

\end{document}